\documentclass[a4paper,authoryear,12pt]{article}
\usepackage{}
\usepackage{amsmath}
\usepackage{cite}
\usepackage[latin1]{inputenc}
\usepackage{amsmath,amsthm,amssymb}
\usepackage{enumerate}
\usepackage[dvipdfm,colorlinks,linkcolor=blue,hyperindex,pdfstartview=FitH,plainpages=false,CJKbookmarks]{hyperref}

\textwidth165mm \textheight25cm \hoffset-16mm \voffset-30mm

\newcommand{\Rmnum}[1]{\expandafter\@slowromancap\romannumeral #1@}

\newcommand{\Om}{\Omega}

\newcommand{\ds}{\displaystyle}

\makeatother
\title{Boundedness and asymptotically stability to chemotaxis system with competitive kinetics and nonlocal terms}
\author {Guangyu Xu\thanks{Corresponding author: guangyuswu@126.com}\\
\small  College of Mathematics and Statistics, Chongqing University,\\
\small Chongqing, 401331, P.R. China }
\date{}
\newtheorem{theorem}{Theorem}[section]

\newtheorem{lemma}{Lemma}[section]
\newtheorem{remark}{Remark}
\newtheorem{corollary}{Corollary}[section]
\begin{document}
\baselineskip20pt \maketitle
\renewcommand{\theequation}{\arabic{section}.\arabic{equation}}
\catcode`@=11 \@addtoreset{equation}{section} \catcode`@=12
\begin{abstract}
  This paper deals with the solution of following chemotaxis system with competitive kinetics and nonlocal terms
  \begin{eqnarray*}
  \left\{
  \begin{array}{llll}
  u_t=d_1\Delta u-\chi_1\nabla\cdot(u\nabla w)+u\left(a_0-a_1u-a_{2}v-a_3\int_\Om u-a_4\int_\Om v\right), &x\in \Omega, t>0,\\
  v_t=d_2\Delta v-\chi_2\nabla\cdot(v\nabla w)+v\left(b_0-b_1u-b_{2}v-b_3\int_\Om u-b_4\int_\Om v\right), &x\in \Omega, t>0,\\
  w_t=d_3\Delta w-\lambda w+k u+l v, &x\in\Omega, t>0,
  \end{array}
  \right.
  \end{eqnarray*}
  in a smoothly bounded domain $\Omega\subset \mathbb{R}^N, N\geq1$, where $a_0, a_1, a_2, b_0, b_1, b_2>0$ and $a_3, a_4, b_3, b_4\in\mathbb{R}$. The purpose of this paper is to investigate the impact on nonlocal terms of the system, and to find clear conditions on parameters such that the system possesses a unique global bounded solution. Our conclusion quantitatively suggests that the nonlocal competitions can contribute to global and uniformly bounded solutions, while the global cooperations are adverse to boundedness of system. That is:
  \begin{itemize}
    \item If $a_3, a_4, b_3, b_4>0$, i.e., there are nonlocal intraspecific and interspecific competitions, when $N\leq2$, then for any positive parameters the solution of system is globally bounded; when $N\geq3$, suitable large $a_1, b_2$ (local intraspecific competitions) ensure there is no blowup.
    \item If $a_3, a_4, b_3, b_4<0$, i.e., there are globally intraspecific and interspecific cooperations, then for any $N\geq1$, a clear and largeness condition on $a_1, b_2$ is obtained which makes the system admits a boundedness solution.
  \end{itemize}
  Furthermore, we consider the globally asymptotically stability of spatially homogeneous equilibrium with weak and strongly asymmetric competition cases, respectively.
\end{abstract}
{\bf Keywords}: Chemotaxis system, boundedness, asymptotically stability, competitive source, nonlocal source.\\
{\bf AMS(2020) Subject Classification}:  35B44; 35K55; 92C17

\section{Introduction}
\hspace*{\parindent}

Chemotaxis indicates the movement of living organisms in response to certain chemical substances in their environments. This type of movement exists in many biological phenomena such as bacteria aggregation and immune system response. In order to depict the aggregation of Dictyostelium discoideum, Keller and Segel heuristically derived the celebrated mathematical model in \cite{keller1970initiation,keller1971model} at the beginning of the 1970s, which is the so-called KS model. The most outstanding characteristic of KS model is the chemotactic term. The corresponding initial boundary value problems have been used to model not only the mentioned biological processes at the microscopic scale but also population dynamics at the macroscopic scale in the context of life sciences, etc. Roughly speaking, such problems often be classified as parabolic-parabolic systems (see \cite{herrero1997blow,winkler2013finite,winkler2017emergence}) or parabolic-elliptic systems (see \cite{herrero1997finite,herrero1996singularity,jager1992explosions,nagai2001blowup,winkler2010boundedness,winkler2011blow,winkler2018finite}). With the development and complement of analytical theory and mathematical method, the original model has been modified by various scholars with the aim of improving its consistency with biological reality. For example, when the logistic type source term including the consumption of resources around the environment is
taken into account in the model, there are also many works on various central aspects like global existence, lager time behaviors, finite time blow-up, and so on (see  \cite{lin2016global,mimura1996aggregating,tao2015boundedness,tello2007chemotaxis,xiang2018strong} and the references therein). For more historical background, mathematical results, biological significance and more extensive progress on the classical KS model and its variants, we would like to mention the surveys \cite{Bellomo2015Toward,hillen2009user,horstmann20031970}.

It is well known that the situation of single population is rare in the real biological systems, there usually are multiple organisms existing at the same time \cite{horstmann2011generalizing,kavallaris2018multi,pearce2007chemotaxis,wolansky2002multi}. Systems of two biological species and one substance without logistic growth factors have been studied in \cite{biler2013blowup,biler2012blowup,conca2011remarks,espejo2009simultaneous,espejo2012simultaneous,espejo2010note,negreanu2014two} with the motivation that whether multi-species chemotaxis mechanisms can be responsible for processes of cell sorting. The competition of living space and resource between biological individuals in the same or different species is common in ecosystem, so in order to formulate the dynamic behaviors of system more clear, the competitions between different organisms or some other evolutionary behaviors often should be taken into account \cite{hibbing2010bacterial,painter2003modelling,yao2006chemotaxis}. The following two species and one chemical stimulus problem with classical Lotka-Volterra type competition \cite{murray1989mathematical} has been proposed by Tello and Winkler in \cite{tello2012stabilization} and then been considered by many authors,
\begin{equation}\label{2}
\left\{
\begin{array}{ll}
u_t=d_1\Delta u-\chi_{1}\nabla\cdot(u\nabla w)+\mu_{1}u(1-u-\bar a_1v),\\
v_t=d_2\Delta v-\chi_2 \nabla\cdot(v\nabla w)+\mu_{2}v(1-v-\bar a_2u),\\
\tau w_t=d_3\Delta w-\lambda w+k u+l v.
\end{array}
\right.
\end{equation}
When $\tau=0, d_i=1 (i=1,2,3), k=l=1$ and $\bar a_1, \bar a_2\in[0, 1)$, Tello and Winkler \cite{tello2012stabilization} proved that, if $2(\chi_1+\chi_2)+\bar a_1\mu_2<\mu_1$ and $2(\chi_1+\chi_2)+\bar a_2\mu_1<\mu_2$, then the solution of the corresponding initial value problem \eqref{2} exists globally and is bounded and the unique nontrivial spatially homogeneous steady state $(\bar{u}^*, \bar{v}^*, \bar{w}^*)$ of the system, as given by
\begin{equation*}\label{1ge}
\bar{u}^*\equiv\frac{1-\bar a_1}{1-\bar a_1\bar a_2},\ \bar{v}^*\equiv\frac{1-\bar a_2}{1-\bar a_1\bar a_2},\ \ \bar{w}^*\equiv\frac{k}{\lambda}\bar{u}^*+\frac{l}{\lambda}\bar{v}^*
\end{equation*}
is globally asymptotically stable. In case of competitive exclusion, i.e., $\bar a_1>1>\bar a_2\geq0$, Stinner, Tello and Winkler \cite{stinner2014competitive} considered system \eqref{2} with $d_3=l=1$ and proved that all nontrivial solutions will be global in time and exists bounded if $k\frac{\chi_1}{\mu_1}+\frac{\chi_2}{\mu_2}<1$. Moreover, under some extra assumptions on $k, \frac{\chi_1}{\mu_1}$ and $\frac{\chi_2}{\mu_2}$, their results suggest that the bounded solution approaches the homogeneous steady state $(0, 1)$ in which the aggressive subpopulation is at its carrying capacity and the less aggressive species has died out. For more follow-up investigations about boundedness, large time behaviors of solutions one can see \cite{black2016,lin2018new,mizukami2018boundedness,wang2019improvement}. Recently, applying the skills given in \cite{Chemotaxis,Winkler2014How}, we \cite{muxu} obtain a sufficient condition on initial data and parameters such that, for arbitrarily lager positive constant $M$ and some points $(\tilde{x}, \tilde{t})$, the solution of problem \eqref{2} with full-parameters version satisfy $u(\tilde{x}, \tilde{t})+v(\tilde{x}, \tilde{t})>M$, this means that the associated carrying capacity of system \eqref{2} can be exceeded during evolution to an arbitrary extent.

As far as the fully parabolic model is concerned, namely, $\tau=1$, Lin, Mu and Wang \cite{lin2015boundedness} studied problem \eqref{2} and got the boundedness of solutions with $N\geq2$ and some other assumptions on parameters. Bai and Winkler \cite{bai2016equilibration} got the global bounded existence of solution when $N\leq2$. For cases $\bar{a}_1, \bar{a}_2\in (0, 1)$ and $\bar{a}_1\geq 1>\bar{a}_2>0$, they further revealed that the global attractivity property of the corresponding equilibrium is actually inherited if the positive parameters satisfy some conditions, respectively. More recently, the authors in \cite{li2019fully} constructed a condition on parameters such that the solution of problem \eqref{2} with $d_i=1\ (i=1,2,3), N=3$ exists globally and bounded. Li \cite{li2019emergence} proved that the corresponding solution of problem \eqref{2} can exceed any given threshold for suitable large initial data. In additions, the authors in \cite{hu2014global} established the existence of nonconstant positive steady states through bifurcation theory, and obtained some conclusions on the stability of the bifurcating solutions. The formation of time-periodic and stable patterns had been investigated in \cite{Wang2015Time} by Hopf bifurcation analysis, the bifurcation values, spatial profiles and time period associated with these oscillating patterns were obtained therein.

The nonlocal terms of integral type have been attracting considerable attention for building the mathematical models in the context of chemotaxis see for instance \cite{armstrong2006continuum,gerisch2008mathematical,sherratt2009boundedness} and the reference therein. In order to develop a new mathematical model of cancer cell invasion of tissue which focusses on the role of the highly controlled invasion mechanisms involved, Szyma\'{n}ska et al. in \cite{szymanska2009mathematical} applied an integro-differential equation model involving cancer cells. As explained in \cite{szymanska2009mathematical}, due to the fact that individual cells proliferating within the overall tumour cell mass have to compete for nutrients, oxygen and space, so even cancer cells under some conditions are suppressed in their proliferation. They considered this phenomenon by using a logistic growth term. But, assuming ordinary logistic growth may be over-simplification. The logistic type growth means that proliferation of the cells depends on the cells and the extracellular matrix concentration at given point, whereas the proliferation probably actually depends on the cell and extracellular matrix concentration in a local neighbourhood, namely, the immediate surrounding of a cell influences its ability to divide. Hence they included a nonlocal term describing a neighbourhood of a cell that inhibits its proliferation in the model. The nonlocal term is
\begin{equation*}
\mu_1u\left(1-\int_\Om k_{1,1}(x,y)udy-\int_\Om k_{1,2}(x,y)vdy\right),
\end{equation*}
here $\mu_1$ denotes the cancer cell proliferation rate, and $k_{1,1}(x,y), k_{1,2}(x,y)$ are given spatial kernels. The terms $u\int_\Om k_{1,1}(x,y)udy$ and $u\int_\Om k_{1,2}(x,y)vdy$ describe the inhibition of the cells' proliferation caused by the density of surrounding cells and extracellular matrix respectively. Green et al. \cite{green2010non} investigated the effect of hepatocyte-stellate cell interactions on the aggregation process, and they also adopted nonlocal terms to represent cell-cell interactions due to overcrowding or the action of the stellate' processes on hepatocyte, see \cite{green2010non} for detail.

Negreanu and Tello \cite{negreanu2013competitive} considered following competitive system under chemotactic effects with nonlocal terms
\begin{eqnarray}{\label{11}}
\left\{
\begin{array}{llll}
u_t=\Delta u-\chi_1\nabla\cdot(u\nabla w)+u(a_0-a_1u-a_{2}v-a_3\int_\Om u-a_4\int_\Om v),\\
v_t=\Delta v-\chi_2\nabla\cdot(v\nabla w)+v(b_0-b_1u-b_{2}v-b_3\int_\Om u-b_4\int_\Om v),\\
0=\Delta w-\lambda w+k_1 u+k_2 v+f,
\end{array}
\right.
\end{eqnarray}
the forcing term $f$ represents that the chemical substance is also introduced in the system from outside, which is a uniformly bounded and satisfies $f\in C^{\alpha,\beta}(\bar\Om\times[0,\infty))$ for $\alpha>0, \beta\geq1+\frac{\alpha}{2}$ and for some positive constant $C_0$ there holds
\begin{equation*}
\int_0^\infty |\sup_{x\in\Om}f-\inf_{x\in\Om}f|\leq C_0<\infty.
\end{equation*}
The result in \cite[Theorem 0.2]{negreanu2013competitive} suggests that the solution of system \eqref{11} exists bounded and converges to the constant pair $(u^*, v^*)$. Issa et al. in \cite{RachidiAsymptotic} considered full-parameter version of \eqref{11} with $f\equiv0$, and they obtained the boundedness and asymptotic behaviors of solution under different cases by so-called eventual comparison method.

In particular, the following classical parabolic-elliptic type KS system with both local and nonlocal heterogeneous logistic source has been considered in \cite{issa2017dynamics},
\begin{eqnarray*}
\left\{
\begin{array}{llll}
u_t=\Delta u-\chi\nabla\cdot(u\nabla v)+u\left(a_0(x,t)-a_1(x,t)u-a_{2}(x,t)\int_\Om u\right),\\
0=\Delta v+u-v,
\end{array}
\right.
\end{eqnarray*}
here $a_0(x,t), a_1(x,t)$ are nonnegative bounded functions, and $a_2(x,t)$ is a bounded real valued function. The authors showed the local existence and uniqueness of classical solutions to the corresponding initial boundary value problem, and studied systematically the global existence and boundedness of solution, the existence of entire and time periodic positive solution. In addition, aiming to detect the influence of nonlocal term on the behavior of solutions, Bian et al. \cite{bian2018nonlocal} considered the classical parabolic-elliptic type KS system with nonlocal terms as follows,
\begin{eqnarray*}
\left\{
\begin{array}{ll}
u_t=\Delta u-\chi\nabla\cdot(u\nabla v)+u^\alpha\left(1-\int_\Om u^\beta dx\right),\\
0=\Delta v+u-v,
\end{array}
\right.
\end{eqnarray*}
where $\alpha\geq1, \beta>1$. The authors proved that if spatial dimension $N\geq3$, and the parameters satisfy $2\leq\alpha<1+\frac{2\beta}{N}$ or
\begin{equation*}
\alpha<2 \  \ \mbox{and}\ \ \frac{N+2}{N}(2-\alpha)<1+\frac{2\beta}{N}-\alpha,
\end{equation*}
then the system admits a unique global classical solution which is uniformly bounded. This conclusion suggests that the $\beta>\frac{N}{2}$ can prevent chemotactic collapse.

Inspired by the mentioned works, in this paper, we study following chemotaxis system of parabolic-parabolic-parabolic type
\begin{eqnarray}{\label{1}}
\left\{
\begin{array}{llll}
u_t=d_1\Delta u-\chi_1\nabla\cdot(u\nabla w)+u\left(a_0-a_1u-a_{2}v-a_3\int_\Om u-a_4\int_\Om v\right), &x\in \Omega, t>0,\\
v_t=d_2\Delta v-\chi_2\nabla\cdot(v\nabla w)+v\left(b_0-b_1u-b_{2}v-b_3\int_\Om u-b_4\int_\Om v\right), &x\in \Omega, t>0,\\
w_t=d_3\Delta w-\lambda w+k u+l v, &x\in\Omega, t>0,\\
\ds\frac{\partial u}{\partial\nu }=\frac{\partial v}{\partial \nu}=\frac{\partial w}{\partial\nu }=0, &x\in\partial\Omega, t>0,\\
(u, v, w)(x, 0)=(u_0, v_0, w_0), &x\in\Omega,
\end{array}
\right.
\end{eqnarray}
where $\Omega\subset \mathbb{R}^N\ (N\geq1)$ is a bounded domain with smooth boundary $\partial\Omega, \frac{\partial}{\partial\nu}$ denotes the derivative with respect to the outer normal on $\partial\Omega$, the parameters satisfy
\begin{equation}\label{csdy}
d_1, d_2, d_3, \chi_1, \chi_2, a_0, a_1, a_2, b_0, b_1, b_2, \lambda, k, l>0\ \mbox{and}\ a_3, a_4, b_3, b_4\in\mathbb{R}.
\end{equation}
The functions $u=u(x, t)$ and $v=v(x, t)$ are the densities of the two species, $w=w(x, t)$ stands for the concentration of an attractive signal produced by $u$ and $v$, $\Delta\psi, \psi\in\{u, v, w\}$ expresses the random diffusions of species and signals with ratios $d_1, d_2$ and $d_3$, respectively. The cross diffusion terms expresses the advection of species due to chemotaxis, $\chi_i\ (i=1,2)$ are two positive constants which measure the sensitivity of the mobile species to the chemical substances. Terms $k u$ and $lv$ indicate that the mobile species produces the chemical substance as time goes by, in the meantime the signal substance also degrading with tempo $\lambda$, $-a_1u^2, -b_2v^2$ represent the crowding effect which is caused by the local intra-specific competition with peers of the same species, while the other two negative feedback terms $-a_2uv$ and $ -b_1vu$ represent the local interspecific competition between the two species. $a_0, b_0$ measure the intrinsic species growth. When $a_3>0$ and $b_4>0$, they are the strength of nonlocal intraspecific competition between the two species, respectively, $a_4$ and $b_3$ are the strength of nonlocal interspecific competition if $a_3, b_3>0$. Similarly, if $a_3, b_4<0$ and $a_4, b_3<0$, then the species globally intraspecific cooperation and interspecific cooperation, respectively. The initial data $(u_0, v_0, w_0)$ are nonnegative and nontrivial function, which satisfy
\begin{equation}\label{csztj}
u_0, v_0\in C^0(\bar\Om),\ w_0\in W^{1,q}(\bar\Om)\ \mbox{for}\ q>\max\{2, N\}.
\end{equation}

For the two species described in problem \eqref{1} with $a_3, a_4, b_3, b_4>0$, the weak competition regime is when
\begin{equation}\label{rjz}
\frac{b_1+b_3|\Om|}{a_1+a_3|\Om|}<\frac{b_0}{a_0}<\frac{b_2+b_4|\Om|}{a_2+a_4|\Om|},
\end{equation}
and in this case the unique positive constant equilibrium $(u^*, v^*, w^*)$ can be calculated as
\begin{equation}\begin{split}\label{0.77}
& u^* :\equiv\frac{a_0(b_2+b_4|\Om|)-b_0(a_2+a_4|\Om|)}{(b_2+b_4|\Om|)(a_1+a_3|\Om|)-(a_2+a_4|\Om|)(b_1+b_3|\Om|)},\\
& v^* :\equiv\frac{a_0(b_1+b_3|\Om|)-b_0(a_1+a_3|\Om|)}{(a_2+a_4|\Om|)(b_1+b_3|\Om|)-(b_2+b_4|\Om|)(a_1+a_3|\Om|)},\\
& w^* :\equiv\frac{a_0[k(b_2+b_4|\Om|)-l(b_1+b_3|\Om|)]+b_0[l(a_1+a_3|\Om|)-k(a_2+a_4|\Om|)]}{\lambda[(b_2+b_4|\Om|)(a_1+a_3|\Om|)-(a_2+a_4|\Om|)(b_1+b_3|\Om|)]}.
\end{split}\end{equation}
While the strongly asymmetric competition case is when
\begin{equation}\label{qjz}
\frac{b_1+b_3|\Om|}{a_1+a_3|\Om|}<\frac{b_2+b_4|\Om|}{a_2+a_4|\Om|}\leq\frac{b_0}{a_0},
\end{equation}
and the corresponding semi-trivial equilibria is
\begin{equation}\begin{split}\label{0.747}
(u^\star, v^\star, w^\star) :\equiv \left(0,\ \frac{b_0}{b_2+b_4|\Om|},\ \frac{lb_0}{\lambda(b_2+b_4|\Om|)}\right).
\end{split}\end{equation}
The full strong competition regime is when
\begin{equation}\label{haheq}
\frac{b_1+b_3|\Om|}{a_1+a_3|\Om|}>\frac{b_0}{a_0}>\frac{b_2+b_4|\Om|}{a_2+a_4|\Om|}.
\end{equation}

Our main results for present paper are summarized as follows.
\begin{itemize}
  \item [1.] We first build global existence and boundedness properties with all space dimensions $N\geq1$. In particular, when $N\leq2$, if there is no globally cooperates between the species, i.e., $a_3, a_4, b_3, b_4>0$, then the solution of problem \eqref{1} exists boundedness for any sizes of parameters given in \eqref{csdy}; while when $a_3, a_4, b_3, b_4<0$, we find a condition on parameters such that the solution exists boundedness. On the other hand, for the high dimensional case $N>2$, a clear condition on $a_1$ and $b_2$ is obtained, which indicates that suitable large $a_1$ and $b_2$ ensures that the solution is bounded uniformly, see Theorem \ref{s};
  \item [2.] After the boundedness outcome, the second aspect we consider in this paper is the stabilization of constant solution $(u^*, v^*, w^*)$ and $(u^\star, v^\star, w^\star)$, respectively. In the weak competition regime \eqref{rjz}, we shall prove that for any initial values the constant solution $(u^*, v^*, w^*)$ is globally asymptotically stable if $a_1$ and $b_2$ properly big, see Theorem \ref{33}. As for the strongly asymmetric case, the same globally asymptotical stability of $(u^\star, v^\star, w^\star)$ is shown when $b_2$ is suitable large only, see Theorem \ref{1q}.
\end{itemize}

Problem \eqref{1} becomes to system \eqref{2} when
\begin{equation}\label{zhuan}\begin{split}
& a_0=a_1=\mu_1,\ a_2=\mu_1\bar a_1,\\
& b_0=b_2=\mu_2,\ b_1=\mu_2\bar a_2,\\
& a_3=a_4=b_3=b_4=0.
\end{split}\end{equation}
In this case, the weak competition regime \eqref{rjz} reduces to $\bar{a}_1, \bar{a}_2\in (0, 1)$ for system \eqref{2}, and the strongly asymmetric competition case \eqref{qjz} becomes to $\bar{a}_1\geq1>\bar{a}_2>0$, while, the full strong competition regime \eqref{haheq} is equivalent to $\bar{a}_1>1$ and $\bar{a}_2>1$. Then our results mentioned above also imply the corresponding conclusions on system \eqref{2}, which improve and extend some existing results, see the corollaries and remarks underneath our main theorems.

For the globally asymptotical stability properties of system \eqref{1} with full strong competition regime \eqref{haheq}, we leave it as a open question for future research.

The rest of paper is organized as follows. In Section 2, we consider the boundedness of solution to problem \eqref{1}. The main theorem in this aspect, some remarks and the detailed proofs are given in order. In Section 3, we study the stabilization of constant stationary solution, and this section is divided into two parts. The globally asymptotical stability of $(u^*, v^*, w^*)$ under weak competition regime shall be obtained in Subsection 3.1, and in Subsection 3.2, we show the globally asymptotical behaviors of $(u^\star, v^\star, w^\star)$ under strongly asymmetric competition case.

\section{Boundedness of solution}

For any constant $a\in\mathbb{R}$, its positive part and its negative part are given by
\begin{equation*}
(a)_+=\max\{0, a\}\ \ \mbox{and}\ \ (a)_-=\max\{0, -a\}.
\end{equation*}
It should be point out that the regularity of initial values in \eqref{csztj} is enough for us to establish the local existence of solution to problem \eqref{1}, see \cite{bai2016equilibration,winkler2010boundedness}, but we need further following assumption on $w_0(x)$,
\begin{equation}\label{csztj2}
w_0(x)\in W^{2,r}(\Omega)\ \mbox{for}\ r>N.
\end{equation}
which makes Lemma \ref{3.6} below valid then a integral estimate of element $w(x,t)$ can be used to obtain the boundedness of solution.

Now, we give the main conclusion about boundedness for problem \eqref{1}.

\begin{theorem}\label{s}
Let initial data satisfy \eqref{csztj} and \eqref{csztj2} and $p>N$ be a constant. Assume the parameters meet \eqref{csdy}. If one of the following conditions is satisfied
\begin{itemize}
  \item $N\leq 2$ and
  \begin{equation}\label{lazx}
  \min\{a_1, b_2\}>\max\left\{(a_3)_-,\ (b_4)_-,\ (a_4)_-,\ (b_3)_-\right\}|\Om|,
  \end{equation}
  \item or $N\geq3$, \eqref{lazx} holds and
  \begin{equation}\begin{split}\label{tj1ha}
  &a_1>(p-1)\chi_{1}+p[(a_3)_-+(a_4)_-]|\Om|^{\frac{1}{p}}+[(a_3)_-+(b_3)_-]|\Om|^{p}+(2k)^{p+1}(\chi_1+\chi_{2})C_{p},\\
  &b_2>(p-1)\chi_{2}+p[(b_4)_-+(b_3)_-]|\Om|^{\frac{1}{p}}+[(a_4)_-+(b_4)_-]|\Om|^{p}+(2l)^{p+1}(\chi_1+\chi_{2})C_{p},
  \end{split}\end{equation}
\end{itemize}
where $C_p$ is a positive constant which is corresponding to the maximal Sobolev regularity (see Lemma \ref{3.6}), then the solution of problem \eqref{1} exists globally and bounded in $\bar\Om\times(0, +\infty)$.
\end{theorem}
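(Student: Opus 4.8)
The plan is to follow the standard three-step bootstrap for fully parabolic chemotaxis systems: local existence with an extensibility criterion, then a chain of a priori estimates $L^1\to L^p\to L^\infty$ with $p>N$, the uniform $L^p$ bound being the decisive step in which the conditions \eqref{lazx} and \eqref{tj1ha} enter. First I would invoke the cited local theory (e.g.\ \cite{bai2016equilibration,winkler2010boundedness}) to obtain a unique classical solution on a maximal interval $[0,T_{\max})$ together with the criterion that $T_{\max}<\infty$ forces $\|u(\cdot,t)\|_{L^\infty(\Om)}+\|v(\cdot,t)\|_{L^\infty(\Om)}\to\infty$; since $w$ gains regularity from $u,v$ through parabolic smoothing, controlling $u,v$ in $L^\infty$ is enough, so the whole argument reduces to a uniform-in-time $L^\infty$ bound on $u$ and $v$.

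Second, I would derive the $L^1$ bounds. Writing $m(t)=\int_\Om u$ and $n(t)=\int_\Om v$ and integrating the first two equations of \eqref{1} over $\Om$, the chemotactic and diffusive terms drop out and one is left with $m'=a_0 m-a_1\int_\Om u^2-a_2\int_\Om uv-a_3 m^2-a_4 mn$ and its analogue for $n$. Using $\int_\Om u^2\ge m^2/|\Om|$ to turn the local dissipation into $-(a_1/|\Om|+a_3)m^2$, discarding the nonnegative competition integrals, and bounding the cooperative cross terms by their negative parts, I see that \eqref{lazx} is exactly what makes the effective quadratic coefficients positive and renders the resulting planar system for $(m,n)$ dissipative; a comparison argument then yields uniform bounds on $m,n$, hence on $\|u\|_{L^1(\Om)}$ and $\|v\|_{L^1(\Om)}$.

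Third comes the core $L^p$ estimate. I would test the $u$- and $v$-equations with $u^{p-1}$ and $v^{p-1}$ and add them. Integrating the chemotactic terms by parts rewrites them as $-\tfrac{p-1}{p}\chi_1\int_\Om u^p\Delta w$ and $-\tfrac{p-1}{p}\chi_2\int_\Om v^p\Delta w$; because the $w$-equation is genuinely parabolic, $\Delta w$ cannot be substituted algebraically as in the elliptic case, so instead I would apply Young's inequality to split each term into a multiple of $\int_\Om u^{p+1}$ (respectively $\int_\Om v^{p+1}$), to be absorbed by the logistic dissipation $-a_1\int_\Om u^{p+1}$ (respectively $-b_2\int_\Om v^{p+1}$) and contributing the $(p-1)\chi_1$, $(p-1)\chi_2$ terms in \eqref{tj1ha}, plus a multiple of $\int_\Om|\Delta w|^{p+1}$. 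The cooperative nonlocal terms (those with $a_3,a_4,b_3,b_4<0$) are likewise dominated via Young's inequality, producing the contributions weighted by $(a_3)_-,(a_4)_-,(b_3)_-,(b_4)_-$ and the associated powers of $|\Om|$. After integrating in time I would invoke Lemma \ref{3.6} (maximal Sobolev regularity for $w_t-d_3\Delta w+\lambda w=ku+lv$) to bound the space-time integral of $|\Delta w|^{p+1}$ by $C_p$ times that of $(ku+lv)^{p+1}\le 2^{p+1}(k^{p+1}u^{p+1}+l^{p+1}v^{p+1})$, which feeds back precisely the coefficients $(2k)^{p+1}(\chi_1+\chi_2)C_p$ and $(2l)^{p+1}(\chi_1+\chi_2)C_p$. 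The thresholds \eqref{tj1ha} are arranged so that the total coefficient of $\int_0^t\!\int_\Om u^{p+1}$ and $\int_0^t\!\int_\Om v^{p+1}$ on the right is strictly below $a_1$ and $b_2$, whence a Gr\"onwall/absorption argument closes the estimate and gives $\sup_t(\|u\|_{L^p(\Om)}+\|v\|_{L^p(\Om)})<\infty$ for the fixed $p>N$. When $N\le2$ the favourable Sobolev embeddings ($W^{1,2}\hookrightarrow L^q$ for every finite $q$) allow the chemotactic coupling to be controlled without any smallness of the sensitivities, so only the $L^1$-level requirement \eqref{lazx} is needed there.

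Finally, with $\|u\|_{L^p(\Om)},\|v\|_{L^p(\Om)}$ bounded for $p>N$, parabolic regularity for the $w$-equation gives $w\in L^\infty((0,T_{\max});W^{2,p}(\Om))$ and hence $\nabla w\in L^\infty$; treating the chemotactic term as a bounded drift, a variation-of-constants estimate with the Neumann heat semigroup (or a Moser iteration) upgrades $u,v$ to a uniform $L^\infty$ bound, which by the extensibility criterion forces $T_{\max}=\infty$ and yields global boundedness. I expect the main obstacle to be the third step: because the signal equation is parabolic, the clean elliptic substitution is unavailable and one must route $\int_\Om u^p\Delta w$ through maximal Sobolev regularity, the delicate part being the exact accounting of constants so that the largeness thresholds in \eqref{tj1ha} precisely suffice to absorb every $\int_0^t\!\int_\Om u^{p+1}$ and $\int_0^t\!\int_\Om v^{p+1}$ contribution into the logistic terms.
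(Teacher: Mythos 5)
Your proposal follows essentially the same route as the paper: $L^1$ bounds from \eqref{lazx} via the Cauchy--Schwarz absorption $\int_\Om u^2\ge(\int_\Om u)^2/|\Om|$, then for $N\ge3$ an $L^p$ estimate obtained by testing with $u^{p-1},v^{p-1}$, routing $\int_\Om u^p\Delta w$ through Young's inequality and the maximal Sobolev regularity of Lemma \ref{3.6} applied to $g=ku+lv$ (which is exactly where the $(2k)^{p+1}(\chi_1+\chi_2)C_p$ terms in \eqref{tj1ha} arise), for $N=2$ an $L^2$ estimate via the two-dimensional Gagliardo--Nirenberg inequality together with the space-time bound on $\|\Delta w\|_{L^2}^2$, and finally a standard $L^p\to L^\infty$ bootstrap combined with the extensibility criterion. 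This matches the paper's Lemmas 2.3--2.8 and the concluding argument, so the proposal is correct and not materially different.
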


The conclusion of Theorem \ref{s} is applicable for problem \eqref{2}. When \eqref{zhuan} holds, then problem \eqref{1} becomes to system \eqref{2} and \eqref{lazx} holds obviously and our condition \eqref{tj1ha} turns to
\begin{equation}\begin{split}\label{tj1ha2}
  &\mu_1>(p-1)\chi_{1}+(2k)^{p+1}(\chi_1+\chi_{2})C_{p},\\
  &\mu_2>(p-1)\chi_{2}+(2l)^{p+1}(\chi_1+\chi_{2})C_{p},
\end{split}\end{equation}
which gives a relation between the sizes of $\mu_i$ and $\chi_i$ such that the solution of problem \eqref{1} exists globally and uniformly bounded. Hence, Theorem \ref{s} covers the result in \cite[Theorem 1.1]{bai2016equilibration} with case $N\leq 2$. We get following corollary automatically which completes the boundedness property of solution to corresponding initial boundary value problem \eqref{2} for space dimensions $N\geq3$.

\begin{corollary}\label{tl1}
Let $N\geq3, d_1, d_2, d_3, \chi_1, \chi_2, \mu_1, \mu_2, \bar{a}_1, \bar{a}_2, \lambda, k, l$ be positive constants and \eqref{tj1ha2} hold. Then, for any choice of initial value satisfy \eqref{csztj} and \eqref{csztj2}, the solution of system \eqref{2} with homogeneous Neumann boundary condition is bounded in $\bar\Om\times(0, +\infty)$.
\end{corollary}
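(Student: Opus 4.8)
The plan is to obtain Corollary \ref{tl1} as a direct specialization of Theorem \ref{s}, since system \eqref{2} equipped with homogeneous Neumann boundary conditions is precisely problem \eqref{1} under the parameter identification \eqref{zhuan}. First I would record that under \eqref{zhuan} one has $a_0=a_1=\mu_1>0$, $a_2=\mu_1\bar a_1\geq 0$, $b_0=b_2=\mu_2>0$, $b_1=\mu_2\bar a_2\geq 0$, and $a_3=a_4=b_3=b_4=0$, so that the structural requirement \eqref{csdy} on the coefficients of \eqref{1} is met; note in particular that the four nonlocal coefficients are permitted to be any real numbers, so the value zero is admissible.

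Next I would verify that the two hypotheses of the $N\geq3$ branch of Theorem \ref{s} hold. Condition \eqref{lazx} is immediate: with $a_3=a_4=b_3=b_4=0$ every negative part $(a_3)_-,(a_4)_-,(b_3)_-,(b_4)_-$ vanishes, so the right-hand side of \eqref{lazx} equals $0$, while the left-hand side $\min\{\mu_1,\mu_2\}$ is strictly positive. For the largeness condition \eqref{tj1ha}, setting the nonlocal coefficients to zero annihilates the terms $p[(a_3)_-+(a_4)_-]|\Om|^{1/p}$ and $[(a_3)_-+(b_3)_-]|\Om|^{p}$ in the first inequality, together with their analogues $p[(b_4)_-+(b_3)_-]|\Om|^{1/p}$ and $[(a_4)_-+(b_4)_-]|\Om|^{p}$ in the second, so that \eqref{tj1ha} collapses exactly to the pair \eqref{tj1ha2}, namely $\mu_1>(p-1)\chi_1+(2k)^{p+1}(\chi_1+\chi_2)C_p$ and $\mu_2>(p-1)\chi_2+(2l)^{p+1}(\chi_1+\chi_2)C_p$, which are assumed in the hypothesis.

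Having matched every hypothesis, I would simply invoke Theorem \ref{s} in its $N\geq3$ case: it guarantees that the solution of problem \eqref{1}, which here coincides with system \eqref{2}, exists globally and is bounded on $\bar\Om\times(0,+\infty)$ for any initial data satisfying \eqref{csztj} and \eqref{csztj2}. Since $C_p$ is the fixed maximal Sobolev regularity constant from Lemma \ref{3.6}, the very same constant enters both \eqref{tj1ha} and \eqref{tj1ha2}, so the reduction is an exact identity of conditions and no fresh estimate is required. The only point demanding care is the bookkeeping of the passage from \eqref{tj1ha} to \eqref{tj1ha2}; there is no genuine analytic obstacle here, as all the substantive work already resides in the proof of Theorem \ref{s}.
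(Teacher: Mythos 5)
Your proposal is correct and matches the paper's own argument: the paper obtains Corollary \ref{tl1} "automatically" by exactly this specialization, noting that under \eqref{zhuan} condition \eqref{lazx} holds trivially and \eqref{tj1ha} reduces to \eqref{tj1ha2}, then invoking the $N\geq3$ case of Theorem \ref{s}. The only cosmetic remark is that you write $a_2=\mu_1\bar a_1\geq0$ and $b_1=\mu_2\bar a_2\geq0$, whereas \eqref{csdy} requires strict positivity; this is harmless here since the corollary assumes $\bar a_1,\bar a_2>0$.
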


The authors in \cite{lin2015boundedness} also studied problem \eqref{2} and proved that, if $\Om$ is a smooth bounded convex domain with $N\geq3$ and the parameters satisfy
\begin{equation}\label{sc}
\lambda\geq\frac{1}{2}\ \ \mbox{and}\ \ k=l=1,
\end{equation}
and
\begin{equation*}
\mu_1>\frac{\chi_1N}{4},\ \mu_2>\frac{\chi_2N}{4},
\end{equation*}
as well as
\begin{equation*}
\mu_1+\frac{1}{2}\bar{a}_1\mu_1+\frac{1}{2}\bar{a}_2\mu_2\frac{\chi_1}{\chi_2}>\frac{\chi_1N}{2},\ \ \mu_2+\frac{1}{2}\bar{a}_2\mu_2+\frac{1}{2}\bar{a}_1\mu_1\frac{\chi_2}{\chi_1}>\frac{\chi_1N}{2},
\end{equation*}
then the solution of problem \eqref{2} is bounded. The convexity of domain and assumption \eqref{sc} have been deleted in \cite{li2019fully} with special case $d_1=d_2=d_3=1, N=3$.

\begin{remark}
Corollary \ref{tl1} gives a answer to the open question proposed in \cite[Remark 1.5]{lin2015boundedness}, that is, the convexity of domain is not necessary for the bounded solution of problem \eqref{2} for any $N\geq3$. Moreover, the conclusion extends the outcome obtained in \cite{li2019fully} to more general parameters version of problem \eqref{2} with $N>3$.
\end{remark}

\begin{remark}
We give some comments about the impact on nonlocal terms of problem \eqref{1}. If $a_3, a_4, b_3, b_4>0$, then \eqref{lazx} can be deleted since $a_1, b_2>0$, and \eqref{tj1ha} becomes to
\begin{equation*}\begin{split}
  &a_1>(p-1)\chi_{1}+(2k)^{p+1}(\chi_1+\chi_{2})C_{p},\\
  &b_2>(p-1)\chi_{2}+(2l)^{p+1}(\chi_1+\chi_{2})C_{p}.
\end{split}\end{equation*}
So Theorem \ref{s} tell us that under the influence of local and nonlocal competitive kinetics, if $N\leq2$, the solution of problem \eqref{1} exists bounded for any positive parameters; while for $N\geq3$, we need suitably large strength of the crowding effect caused by the local intraspecific competition with peers of the same species to dominate the actions of two cross diffusion terms. When $a_3, a_4, b_3, b_4<0$, which means that there are globally intraspecific cooperate and interspecific cooperate between two species, then \eqref{lazx} and \eqref{tj1ha} suggest that we need some larger $a_1, b_2$ to ensure the solution existing bounded.
\end{remark}

Throughout the sequel in this section, we aim to prove Theorem \ref{s}. As a preliminary, we state firstly the following local well-posedness of solution to problem \eqref{1}, which can be got by well-established fixed point arguments, see \cite{winkler2010boundedness}.

\begin{lemma}{\label{3.1}}
For any initial value satisfying \eqref{csztj} and parameters meeting \eqref{csdy}, there exists $T_{\max}\in (0, \infty]$ and a uniquely determined triple $(u, v, w)$ of functions
\begin{equation*}
\begin{split}
& u(x,t) \in C^{0}([0,T_{\max})\times\overline{\Omega})\cap C^{2,1}(\overline{\Omega}\times(0,T_{\max})),\\
& v(x,t) \in C^{0}([0,T_{\max})\times\overline{\Omega})\cap C^{2,1}(\overline{\Omega}\times(0,T_{\max})),\\
& w(x,t) \in C^{0}([0,T_{\max})\times\overline{\Omega})\cap C^{2,1}(\overline{\Omega}\times(0,T_{\max}))\cap L^{\infty}_{loc}([0,T_{\max}); W^{1,q}(\Omega)),
\end{split}
\end{equation*}
which solves problem \eqref{1} classically in $\Omega\times(0,T_{\max})$. Furthermore, we have the following extensibility criterion: If $T_{\max}<\infty$, then
\begin{equation*}
\lim_{t\rightarrow T_{\max}}\left(\|u(\cdot, t)\|_{L^{\infty}(\Omega)}+\|v(\cdot, t)\|_{L^{\infty}(\Omega)}+\|w(\cdot, t)\|_{L^{\infty}(\Omega)}\right)=\infty.
\end{equation*}
\end{lemma}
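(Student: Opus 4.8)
The plan is to obtain the solution by a Banach fixed-point argument applied to the variation-of-constants (Duhamel) reformulation of \eqref{1}, and then to bootstrap parabolic regularity so as to upgrade the resulting mild solution to a classical one. Denoting by $(e^{td_i\Delta})_{t\ge 0}$ the Neumann heat semigroups on $\Om$, I would recast \eqref{1} as the integral system in which $u(t)=e^{td_1\Delta}u_0-\chi_1\int_0^t e^{(t-s)d_1\Delta}\nabla\cdot(u\nabla w)(s)\,ds+\int_0^t e^{(t-s)d_1\Delta}[u\,g_u](s)\,ds$, with $g_u:=a_0-a_1u-a_2v-a_3\int_\Om u-a_4\int_\Om v$, together with the analogous formula for $v$ and $w(t)=e^{t(d_3\Delta-\lambda)}w_0+\int_0^t e^{(t-s)(d_3\Delta-\lambda)}(ku+lv)(s)\,ds$. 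The natural space for the contraction is
\[
X_T:=C^0([0,T];C^0(\bar\Om))\times C^0([0,T];C^0(\bar\Om))\times C^0([0,T];W^{1,q}(\Om)),
\]
with $q>\max\{2,N\}$ as in \eqref{csztj}, and the solution map $\Phi=(\Phi_1,\Phi_2,\Phi_3)$ is defined by these three formulas.

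The genuinely singular contribution is the cross-diffusion term, and the key is to absorb its time weight. I would invoke the standard smoothing estimate $\|e^{td_i\Delta}\nabla\cdot\varphi\|_{L^\infty(\Om)}\le C(1+t^{-\frac12-\frac{N}{2q}})\|\varphi\|_{L^q(\Om)}$; since $q>N$ the exponent satisfies $\frac12+\frac{N}{2q}<1$, so $\int_0^t(t-s)^{-\frac12-\frac{N}{2q}}\,ds=O(T^{\theta})$ with $\theta=\frac12-\frac{N}{2q}>0$, and this positive power of $T$ is what makes $\Phi$ a contraction for small $T$. The factor $u\nabla w$ is estimated by $\|u\nabla w\|_{L^q}\le\|u\|_{L^\infty}\|\nabla w\|_{L^q}$, while for $\Phi_3$ the gradient smoothing $\|\nabla e^{td_3\Delta}\phi\|_{L^q}\le C(1+t^{-1/2})\|\phi\|_{L^q}$ together with the embedding $W^{1,q}(\Om)\hookrightarrow C^0(\bar\Om)$ controls $w$ in $W^{1,q}$ in terms of the $X_T$-norm. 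The nonlocal functionals are harmless: $|\int_\Om u|\le|\Om|\,\|u\|_{L^\infty}$ are bounded linear functionals, hence Lipschitz on bounded sets, and the polynomial terms $u^2,uv,\dots$ are locally Lipschitz in $C^0(\bar\Om)$. Choosing $T$ small and the ball radius comparable to the data, $\Phi$ maps a closed ball of $X_T$ into itself and is a contraction, which yields a unique mild solution on $[0,T]$; extending by uniqueness and patching gives the maximal time $T_{\max}$.

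For the asserted regularity I would bootstrap: given the mild solution, interior parabolic $L^p$-maximal regularity applied to the $w$-equation (whose source $ku+lv$ is locally bounded) gives $w\in W^{2,1}_{p,\mathrm{loc}}$, whence $\nabla w\in C^{\alpha,\alpha/2}_{\mathrm{loc}}$ by Sobolev embedding; feeding this Hölder-continuous drift into the $u,v$ equations and invoking Schauder theory produces $u,v\in C^{2,1}_{\mathrm{loc}}$, and a final bootstrap yields the stated $C^{2,1}(\bar\Om\times(0,T_{\max}))$ regularity of all three components, the membership $w\in L^\infty_{loc}([0,T_{\max});W^{1,q})$ being exactly the semigroup bound above.

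It remains to establish the extensibility criterion, which I regard as the main obstacle. Suppose $T_{\max}<\infty$ and, for contradiction, that $M:=\sup_{t\in(0,T_{\max})}\big(\|u(\cdot,t)\|_{L^\infty}+\|v(\cdot,t)\|_{L^\infty}+\|w(\cdot,t)\|_{L^\infty}\big)<\infty$. The delicate step is to \emph{upgrade} this $L^\infty$ control to control of the full fixed-point norm, i.e.\ to show $\sup_{t<T_{\max}}\|w(\cdot,t)\|_{W^{1,q}}<\infty$; this does not follow from any a priori gradient bound but rather from the smoothing action of the $w$-semigroup. Indeed, from the $w$-Duhamel formula and $\|ku+lv\|_{L^\infty}\le(k+l)M$, the integral $\int_0^t(1+(t-s)^{-1/2})e^{-\lambda(t-s)}\,ds$ is bounded uniformly in $t<T_{\max}$, so $\|\nabla w(\cdot,t)\|_{L^q}$ stays bounded. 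Since the existence time delivered by the contraction argument depends only on the $X$-norm of the starting datum, I could then restart the scheme at $t_0$ near $T_{\max}$ with data $(u(\cdot,t_0),v(\cdot,t_0),w(\cdot,t_0))$ of norm bounded independently of $t_0$, producing a solution on $[t_0,t_0+\tau]$ with $\tau>0$ independent of $t_0$; for $t_0$ sufficiently close to $T_{\max}$ this extends the solution past $T_{\max}$, contradicting maximality. The whole scheme hinges on arranging the cross-diffusion time weights to be integrable, which is precisely what forces the restriction $q>\max\{2,N\}$.
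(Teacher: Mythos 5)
Your proposal is correct and coincides with the paper's own treatment: the paper proves Lemma \ref{3.1} only by appeal to ``well-established fixed point arguments'' (citing Winkler's work and \cite{bai2016equilibration}), and what you wrote out --- the Duhamel reformulation, the contraction in $C^0([0,T];C^0(\bar\Omega))\times C^0([0,T];C^0(\bar\Omega))\times C^0([0,T];W^{1,q}(\Omega))$ with the integrable weight $t^{-\frac12-\frac{N}{2q}}$ (here $q>\max\{2,N\}$ is exactly what makes it integrable), the parabolic regularity bootstrap, and the restart argument yielding the extensibility criterion --- is precisely that standard scheme, executed in full. No gap to report.
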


We shall need the following auxiliary lemma \cite{tao2016blow} to derive some time independent estimates.

\begin{lemma}\label{2222}
Let $T>0, \tau\in(0, T), a, b>0$. Suppose that $y: [0, T)\rightarrow [0, \infty)$ is absolutely continuous and
\begin{equation*}
y'(t)+ay(t)\leq h(t)\ \mbox{for a.e.}\ t\in(0, T)
\end{equation*}
with nonnegative function $h\in L_{loc}^1([0, T))$ satisfying
\begin{equation*}
\int_t^{t+\tau} h(s)ds\leq b\ \mbox{for all}\ t\in[0, T-\tau).
\end{equation*}
Then
\begin{equation*}
y(t)\leq\max\left\{y(0)+b,\ \frac{b}{a\tau}+2b\right\}\ \mbox{for all}\ t\in[0, T).
\end{equation*}
\end{lemma}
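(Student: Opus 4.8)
The plan is to treat this as a Gr\"onwall-type estimate driven by the integrating factor $e^{at}$, with the characteristic twist that the smallness of $h$ is only controlled in an averaged sense over sliding windows of length $\tau$. The first step is to note that, since $y$ is absolutely continuous and $y'+ay\le h$ a.e., the product $e^{at}y(t)$ is absolutely continuous with $(e^{at}y)'\le e^{at}h$ a.e.; integrating over any subinterval $[t_0,t]\subset[0,T)$ yields
$$y(t)\le e^{-a(t-t_0)}y(t_0)+\int_{t_0}^t e^{-a(t-s)}h(s)\,ds.$$
Since $e^{-a(t-s)}\le 1$ on $[t_0,t]$, the integral is dominated by $\int_{t_0}^t h(s)\,ds$, which is at most $b$ whenever $t-t_0\le\tau$ and $t_0\in[0,T-\tau)$, by the hypothesis on $h$.

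The key idea for obtaining the stated \emph{maximum} rather than a sum of the two competing quantities is to pass to the running supremum. Fix $t_*\in[0,T)$; since $y$ is continuous on the compact interval $[0,t_*]$, it attains its maximum $Y:=\max_{0\le s\le t_*}y(s)$ at some $\hat t\in[0,t_*]$. I would then split into two cases according to the location of $\hat t$. If $\hat t\le\tau$, applying the displayed inequality with $t_0=0$, $t=\hat t$ and using $\int_0^{\hat t}h\le\int_0^{\tau}h\le b$ gives $Y=y(\hat t)\le y(0)+b$.

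In the complementary case $\hat t>\tau$, I apply the same inequality on the window $[t_0,\hat t]$ with $t_0=\hat t-\tau\in[0,T-\tau)$; bounding the integral by $b$ and using $y(\hat t-\tau)\le Y$ yields $Y\le e^{-a\tau}Y+b$, that is, $Y\le \frac{b}{1-e^{-a\tau}}$. The elementary inequality $1-e^{-x}\ge x/(1+x)$, equivalent to $e^x\ge1+x$, then gives $\frac{1}{1-e^{-a\tau}}\le 1+\frac{1}{a\tau}$, whence $Y\le \frac{b}{a\tau}+b\le\frac{b}{a\tau}+2b$. Combining both cases, $y(t_*)\le Y\le\max\{y(0)+b,\ \frac{b}{a\tau}+2b\}$, and since $t_*$ was arbitrary the claim follows.

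I expect the only genuine obstacle to be the structural point just exploited: a naive use of the integrating factor anchored at $t_0=0$, splitting $[0,t]$ into $\tau$-windows and summing the resulting geometric series, produces the additive bound $y(0)+\frac{b}{a\tau}+2b$, which is strictly weaker than the claimed maximum. Running the estimate precisely at the point $\hat t$ where the supremum is attained is what decouples the early regime (controlled by $y(0)$) from the asymptotic regime (controlled by the window average of $h$) and converts the sum into a maximum. The remaining ingredients---the integrating-factor identity for absolutely continuous $y$ and the scalar bound for $1/(1-e^{-a\tau})$---are routine.
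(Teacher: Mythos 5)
Your proof is correct, but there is nothing in the paper to compare it against: the paper states this lemma as an imported auxiliary result, citing \cite{tao2016blow}, and gives no proof of its own. Judged on its own terms, your argument is sound. The variation-of-constants inequality $y(t)\le e^{-a(t-t_0)}y(t_0)+\int_{t_0}^{t}e^{-a(t-s)}h(s)\,ds$ is legitimate for absolutely continuous $y$, and both invocations of the window hypothesis are available where you use them: $\int_0^{\hat t}h\le\int_0^{\tau}h\le b$ because $0\in[0,T-\tau)$ and $h\ge0$, and $\int_{\hat t-\tau}^{\hat t}h\le b$ because $\hat t-\tau\in[0,T-\tau)$ whenever $\tau<\hat t<T$. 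Evaluating at the maximizer $\hat t$ of $y$ on $[0,t_*]$ then gives $Y\le e^{-a\tau}Y+b$ in the second case, and your elementary bound $1/(1-e^{-a\tau})\le 1+1/(a\tau)$ (equivalent to $e^{x}\ge 1+x$) is correct, so you in fact obtain the slightly sharper constant $b/(a\tau)+b$ there. Your structural remark is also the right one: anchoring the Gr\"onwall estimate at the running maximizer, rather than at $t=0$ and summing over $\tau$-windows, is precisely what turns the additive bound $y(0)+b/(a\tau)+2b$ into the stated maximum. The proof in the cited source is organized somewhat differently (the $2b$ in the statement reflects two separate expenditures of the window bound, one to locate a time in each $\tau$-window where $y\le b/(a\tau)+b$ and one to propagate forward using $y'\le h$), but your route reaches the same conclusion and is, if anything, cleaner.
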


The following lemma gives us the boundedness of solution with $L^1$-norm.

\begin{lemma}\label{l1yj}
Assume that $(u, v, w)$ is the solution of problem \eqref{1}, $\Om\subset\mathbb{R}^N, N\geq1$ and condition \eqref{lazx} holds. Then there exist positive constants $C_1$ and $C_2$ such that
\begin{equation}\label{yijie}
\int_\Omega(u+v)\leq C_1\ \ \mbox{for all}\ \ t\in(0, T_{\max})
\end{equation}
and
\begin{equation}\label{vds}
\int_t^{t+\tau}\int_\Om(u^2+v^2)\leq C_2\ \ \mbox{for all}\ \ t\in(0, T_{\max}-\tau)
\end{equation}
with $\tau:=\min\{1,\ \frac{1}{2}T_{\max}\}$.
\end{lemma}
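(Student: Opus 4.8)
The plan is to test the first two equations of \eqref{1} against the constant $1$, i.e.\ to integrate them over $\Om$. Since $\int_\Om\Delta u=\int_{\partial\Om}\partial_\nu u=0$ and $\int_\Om\nabla\cdot(u\nabla w)=\int_{\partial\Om}u\,\partial_\nu w=0$ by the homogeneous Neumann boundary conditions, both the diffusion and the chemotactic fluxes drop out. Writing $\bar u(t):=\int_\Om u$ and $\bar v(t):=\int_\Om v$, this produces the ordinary differential identity
\begin{equation*}
\frac{d}{dt}\bar u = a_0\bar u - a_1\int_\Om u^2 - a_2\int_\Om uv - a_3\bar u^2 - a_4\bar u\bar v,
\end{equation*}
together with the analogous one for $\bar v$ carrying the coefficients $b_i$. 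The crucial structural feature is that the local quadratic terms $-a_1\int_\Om u^2$ and $-b_2\int_\Om v^2$ have a definite negative sign, whereas the nonlocal terms may have either sign under \eqref{csdy}.

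Next I would add the two identities and discard the genuinely dissipative local cross terms $-(a_2+b_1)\int_\Om uv\le0$ (recall $u,v\ge0$). The nonlocal contributions are then controlled by splitting each coefficient into its positive and negative parts and invoking the Cauchy--Schwarz inequalities $\bar u^2\le|\Om|\int_\Om u^2$, $\bar v^2\le|\Om|\int_\Om v^2$ and $\bar u\bar v\le\frac{|\Om|}{2}(\int_\Om u^2+\int_\Om v^2)$; this converts every nonlocal quadratic into a multiple of the local quantities $\int_\Om u^2$ and $\int_\Om v^2$. Collecting terms yields
\begin{equation*}
\frac{d}{dt}(\bar u+\bar v)\le a_0\bar u+b_0\bar v-\gamma_1\int_\Om u^2-\gamma_2\int_\Om v^2,
\end{equation*}
where $\gamma_1,\gamma_2$ are explicit combinations of $a_1,b_2$ and of $(a_3)_-,(b_4)_-,(a_4)_-,(b_3)_-$ multiplied by powers of $|\Om|$. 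Condition \eqref{lazx} is exactly what is needed to force $\gamma_1,\gamma_2>0$, and securing this strict positivity --- checking after the sign bookkeeping that the local intraspecific damping dominates the worst-case nonlocal feedback, including the cross term --- is the main obstacle of the lemma.

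Once this coercivity is in hand, both assertions follow by elementary ODE arguments. For \eqref{yijie} I would absorb the linear terms by Young's inequality, using $\bar u\le|\Om|^{1/2}(\int_\Om u^2)^{1/2}$, so that $a_0\bar u\le\tfrac{\gamma_1}{2}\int_\Om u^2+C$ and similarly for $b_0\bar v$; this leaves $\frac{d}{dt}(\bar u+\bar v)\le C-\tfrac{\gamma_1}{2}\int_\Om u^2-\tfrac{\gamma_2}{2}\int_\Om v^2$. Invoking once more $\int_\Om u^2\ge\bar u^2/|\Om|$ turns the right-hand side into a logistic form $C-c(\bar u+\bar v)^2$, whence a standard comparison argument yields the uniform bound $C_1$ on $\int_\Om(u+v)$. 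For the space--time estimate \eqref{vds} I would return to the coercive inequality retaining the full $-\tfrac{\gamma_1}{2}\int_\Om u^2-\tfrac{\gamma_2}{2}\int_\Om v^2$ and integrate over $(t,t+\tau)$: the endpoint values $(\bar u+\bar v)(t)$ and $(\bar u+\bar v)(t+\tau)$ are already controlled by $C_1$ and the linear terms by \eqref{yijie}, so $\min\{\gamma_1,\gamma_2\}\int_t^{t+\tau}\int_\Om(u^2+v^2)$ is bounded by a constant $C_2$ independent of $t$. Alternatively, the two conclusions can be packaged together through Lemma \ref{2222}.
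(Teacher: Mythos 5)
Your strategy is the paper's: integrate the first two equations over $\Om$ so that the diffusion and chemotaxis terms vanish, discard the signed local cross terms $-(a_2+b_1)\int_\Om uv$, control the nonlocal quadratics by Cauchy--Schwarz, and then close with an ODE comparison for \eqref{yijie} followed by a time integration over $(t,t+\tau)$ for \eqref{vds}. Those last two steps are fine and coincide with what the paper does.

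The step I cannot accept as written is the assertion that condition \eqref{lazx} ``is exactly what is needed to force $\gamma_1,\gamma_2>0$''. With the bookkeeping you describe --- first distributing $\bar u\bar v\le\tfrac12(\bar u^2+\bar v^2)$ and then converting $\bar u^2\le|\Om|\int_\Om u^2$ --- the coefficient multiplying $\int_\Om u^2$ is
\begin{equation*}
\gamma_1=a_1-\Bigl[(a_3)_-+\tfrac{1}{2}\bigl((a_4)_-+(b_3)_-\bigr)\Bigr]|\Om|,
\end{equation*}
and analogously for $\gamma_2$ with $(b_4)_-$ in place of $(a_3)_-$. Because the negative parts are \emph{summed} here while \eqref{lazx} only controls their \emph{maximum}, positivity can fail: take $|\Om|=1$, $a_3=a_4=b_3=b_4=-1$ and $a_1=b_2=3/2$; then \eqref{lazx} holds since $3/2>1$, yet $\gamma_1=3/2-2<0$. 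The paper avoids the sum by packaging the entire nonlocal contribution as a single square, namely $C_3\bigl(\int_\Om(u+v)\bigr)^2$ with $C_3=\max\bigl\{(a_3)_-,\,(b_4)_-,\,\tfrac{(a_4)_-+(b_3)_-}{2}\bigr\}$ as in \eqref{zxc} --- the cross term is absorbed into the $2\bar u\bar v$ already present in $(\bar u+\bar v)^2$ --- and only afterwards compares with $a_1\int_\Om u^2+b_2\int_\Om v^2$; it is this maximum, not your sum, that \eqref{lazx} is calibrated against. So either adopt that grouping, or state explicitly that your route needs the stronger largeness condition in which the negative parts are added rather than maximized. (Be warned that even the paper's comparison at this point, $\int_\Om(u^2+v^2)\ge|\Om|^{-1}\bigl(\int_\Om(u+v)\bigr)^2$, loses a factor $2$ --- test $u\equiv v\equiv\mathrm{const}$ --- so the constant-tracking in this lemma deserves care in any version of the argument.)
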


\begin{proof}
Integrating the first and the second equations of \eqref{1} over $\Om$ and using the boundary condition we can see
\begin{equation*}\begin{split}
\frac{d}{dt}\int_\Om u=a_0\int_\Omega u-a_1\int_\Omega u^2-a_2\int_\Omega uv-a_3\left(\int_\Om u\right)^2-a_4\int_\Omega u\int_\Om v
\end{split}\end{equation*}
and
\begin{equation*}\begin{split}
\frac{d}{dt}\int_\Om v=b_0\int_\Omega v-b_1\int_\Omega uv-b_2\int_\Omega v^2-b_3\int_\Omega u\int_\Om v-b_4\left(\int_\Om v\right)^2.
\end{split}\end{equation*}
By the Young inequality and the assumptions $a_0, a_1, a_2, b_0, b_1, b_2>0$ we have
\begin{equation}\begin{split}\label{zxc}
\frac{d}{dt}\int_\Om (u+v)&\leq a_0\int_\Omega u+b_0\int_\Omega v-a_1\int_\Omega u^2-b_2\int_\Omega v^2+C_3\left(\int_\Om(u+v)\right)^2,
\end{split}\end{equation}
where
\begin{equation*}
C_3:=\max\left\{(a_3)_-,\ (b_4)_-,\ \frac{(a_4)_-+(b_3)_-}{2}\right\},
\end{equation*}
then \eqref{zxc} and the Cauchy-Schwarz inequality leads to
\begin{equation*}
\frac{d}{dt}\int_\Om (u+v)\leq\max\{a_0, b_0\} \int_\Omega(u+v)-C_4\left(\int_\Om(u+v)\right)^2,
\end{equation*}
where $C_4:=\frac{\min\{a_1, b_2\}}{|\Om|}-C_3>0$ in view of \eqref{lazx}, so we get \eqref{yijie} by a straightforward ODE comparison argument.

Applying \eqref{yijie} to \eqref{zxc} we can see
\begin{equation*}\begin{split}
\frac{d}{dt}\int_\Om (u+v)\leq (a_0+b_0)C_1-a_1\int_\Omega u^2-b_2\int_\Omega v^2+C_3C_1^2.
\end{split}\end{equation*}
Upon a time integration from $t$ to $t+\tau$, by the fact that $u, v$ are nonnegative and $\tau\leq 1$ we obtain
\begin{equation*}
\int_t^{t+\tau}\int_\Om(u^2+v^2)\leq\frac{(a_0+b_0+1)C_1+C_3C_1^2}{\min\{a_1, b_2\}},
\end{equation*}
so we finish our proof.
\end{proof}

\begin{lemma}
Assume that $(u, v, w)$ is the solution of problem \eqref{1} and $\Om\subset\mathbb{R}^N, N\geq1$ and condition \eqref{lazx} holds. Then there exists positive constant $C_6$ such that
\begin{equation}\label{c6}
\int_t^{t+\tau}\int_\Om |\Delta w|^2\leq C_6\ \ \mbox{for all}\ \ t\in(0, T_{\max}-\tau),
\end{equation}
with $\tau:=\min\{1,\ \frac{1}{2}T_{\max}\}$.
\end{lemma}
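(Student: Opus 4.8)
The plan is to derive a differential inequality for the Dirichlet energy $\int_\Om|\nabla w|^2$ whose dissipation contains exactly the quantity $\int_\Om|\Delta w|^2$, and then to convert the time-integrated form of this inequality into the claimed spacetime bound. First I would test the third equation of \eqref{1} with $-\Delta w$ and integrate over $\Om$. Since $\partial_\nu w=0$ on $\partial\Om$, the boundary terms vanish and integration by parts gives
\begin{equation*}
-\int_\Om w_t\Delta w=\frac12\frac{d}{dt}\int_\Om|\nabla w|^2,\qquad \lambda\int_\Om w\Delta w=-\lambda\int_\Om|\nabla w|^2,
\end{equation*}
so that
\begin{equation*}
\frac12\frac{d}{dt}\int_\Om|\nabla w|^2+d_3\int_\Om|\Delta w|^2+\lambda\int_\Om|\nabla w|^2=-\int_\Om(ku+lv)\Delta w.
\end{equation*}

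Next I would control the right-hand coupling term by Young's inequality, writing $-\int_\Om(ku+lv)\Delta w\leq\frac{d_3}{2}\int_\Om|\Delta w|^2+\frac{1}{2d_3}\int_\Om(ku+lv)^2$ and using $(ku+lv)^2\leq 2\max\{k^2,l^2\}(u^2+v^2)$ to absorb half of the Laplacian term into the left-hand side. This produces, with $y(t):=\int_\Om|\nabla w|^2$ and $h(t):=\frac{2\max\{k^2,l^2\}}{d_3}\int_\Om(u^2+v^2)$,
\begin{equation*}
y'(t)+2\lambda y(t)+d_3\int_\Om|\Delta w|^2\leq h(t).
\end{equation*}
Here the key input is Lemma \ref{l1yj}: since condition \eqref{lazx} holds, estimate \eqref{vds} bounds $\int_t^{t+\tau}\int_\Om(u^2+v^2)$ uniformly, hence $\int_t^{t+\tau}h(s)\,ds$ is bounded by a fixed constant $b$ for all admissible $t$.

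Finally I would use this twice. Dropping the nonnegative term $d_3\int_\Om|\Delta w|^2$, the inequality $y'+2\lambda y\leq h$ together with Lemma \ref{2222} (whose hypotheses are met with $a=2\lambda$, the said $b$, and $y(0)=\int_\Om|\nabla w_0|^2<\infty$ by the regularity \eqref{csztj}) yields a uniform bound $y(t)\leq C_5$ on $(0,T_{\max})$. Then integrating the full inequality over $(t,t+\tau)$ and discarding the nonnegative quantities $y(t+\tau)$ and $2\lambda\int_t^{t+\tau}y$ gives
\begin{equation*}
d_3\int_t^{t+\tau}\int_\Om|\Delta w|^2\leq b+y(t)-y(t+\tau)\leq b+C_5,
\end{equation*}
which is the assertion \eqref{c6} with $C_6=(b+C_5)/d_3$. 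I expect the only delicate points to be verifying that the boundary contributions genuinely vanish under the Neumann condition and confirming the finiteness of $y(0)$ from the $W^{1,q}$-regularity of $w_0$; the remainder is a routine chaining of the two auxiliary lemmas.
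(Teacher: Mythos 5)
Your proposal is correct and follows essentially the same route as the paper: test the third equation with $-\Delta w$, absorb the coupling term $-\int_\Om(ku+lv)\Delta w$ by Young's inequality, invoke \eqref{vds} together with Lemma \ref{2222} to bound $\int_\Om|\nabla w|^2$ uniformly, and then integrate in time to extract the spacetime bound on $\int_\Om|\Delta w|^2$. The only (harmless) difference is that you keep both dissipation terms in a single differential inequality where the paper writes two parallel ones, which changes nothing of substance.
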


\begin{proof}
By multiplying the third equation of \eqref{1} with $-\Delta w$ and integrating by parts we see that
\begin{equation}\label{lvc}\begin{split}
\frac{1}{2}\frac{d}{dt}\int_\Omega|\nabla w|^2=-d_3\int_\Omega|\Delta w|^2-\lambda\int_\Omega|\nabla w|^2-k\int_\Omega u\Delta w-l\int_\Omega v\Delta w.
\end{split}\end{equation}
Using the Young inequality, we get
\begin{equation*}
\frac{d}{dt}\int_\Omega|\nabla w|^2+2\lambda\int_\Omega|\nabla w|^2\leq\frac{\max\{k^2, l^2\}}{d_3}\int_\Omega(u^2+v^2).
\end{equation*}
Applying Lemma \ref{2222} with $y(t)=\int_\Omega|\nabla w|^2, a=2\lambda, b=\frac{\max\{k^2, l^2\}}{d_3}C_2$, here $C_2$ is the positive constant given in \eqref{vds}, then above inequality deduces
\begin{equation}\label{c7}
\int_\Omega |\nabla w|^2\leq C_7\ \ \mbox{for all}\ \ t\in(0, T_{\max}).
\end{equation}
We can infer from \eqref{lvc} that
\begin{equation*}
\begin{split}
\frac{1}{2}\frac{d}{dt}\int_\Omega|\nabla w|^2+\frac{d_3}{2}\int_\Omega|\Delta w|^2\leq\frac{\max\{k^2, l^2\}}{d_3}\int_\Omega(u^2+v^2).
\end{split}\end{equation*}
Upon a time integration from $t$ to $t+\tau$, by \eqref{c7}, \eqref{vds} and the fact $\tau\leq 1$ we obtain \eqref{c6}.
\end{proof}

The following key lemma will be used to prove global existence and boundedness of solution.

\begin{lemma}{\label{3.12}}
For the solution $(u, v, w)$ of problem (\ref{1}) with $N\geq1$ and condition \eqref{lazx} holds, if there exists $p_{0}\geq1$ such that $p_0>\frac{N}{2}$ and
\begin{equation*}
\|u(\cdot, t)\|_{L^{p_{0}}(\Omega)}+\|w(\cdot, t)\|_{L^{p_{0}}(\Omega)}<+\infty \,\,\,\ \text{for all}\,\,\,\ t\in[0, T_{\max}).
\end{equation*}
Then $T_{\max}=+\infty$ and we can find a constant $C>0$ independent of $t$ such that
\begin{equation*}
\begin{split}
\sup_{t>0}\left(\|u(\cdot, t)\|_{L^{\infty}(\Omega)}+\|v(\cdot, t)\|_{L^{\infty}(\Omega)}+\|w(\cdot, t)\|_{L^{\infty}(\Omega)}\right)\leq C.
\end{split}
\end{equation*}
Moreover, the bounded solution is a H\"{o}lder function in the sense that there exists $\sigma \in(0,1)$ such that
\begin{equation*}
\|u\|_{C^{2+\sigma, 1+\frac{\sigma}{2}}(\overline{\Omega}\times[t,t+1])}+\|v\|_{C^{2+\sigma, 1+\frac{\sigma}{2}}(\overline{\Omega}\times[t,t+1])}+\|w\|_{C^{2+\sigma, 1+\frac{\sigma}{2}}(\overline{\Omega}\times[t,t+1])}\leq C
\end{equation*}
for all $t\geq1$.
\end{lemma}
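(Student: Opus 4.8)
The overall strategy is a variation-of-constants (semigroup) bootstrap that exploits the smoothing of the Neumann heat semigroups together with the dissipative sign of the logistic reaction, climbing from the assumed $L^{p_0}$ control with $p_0>N/2$ up to an $L^\infty$ bound; global existence then follows from the extensibility criterion of Lemma \ref{3.1}, and the H\"older bounds come from parabolic regularity. Throughout I would use the $L^1$ bounds of Lemma \ref{l1yj}, and since the first two equations play symmetric roles, the hypothesis (together with the analogous estimate for $v$) furnishes uniform $L^{p_0}$ control of both densities $u$ and $v$, which is what the argument actually requires.

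First I would upgrade the gradient regularity of $w$. Writing the third equation through variation of constants with the semigroup $(e^{\sigma(d_3\Delta-\lambda)})_{\sigma\ge0}$ and source $ku+lv$, I estimate $\|\nabla w(\cdot,t)\|_{L^{q_1}}$ via the standard smoothing bound $\|\nabla e^{\sigma(d_3\Delta-\lambda)}\phi\|_{L^{q_1}}\le C(1+\sigma^{-\frac12-\frac N2(\frac1{p_0}-\frac1{q_1})})e^{-\nu\sigma}\|\phi\|_{L^{p_0}}$. The assumption $p_0>N/2$ is exactly what allows a choice $q_1>N$ for which the exponent $\frac12+\frac N2(\frac1{p_0}-\frac1{q_1})$ stays below $1$, so the time integral converges and $\sup_t\|\nabla w(\cdot,t)\|_{L^{q_1}}<\infty$ for some $q_1>N$; the contribution of the initial datum is controlled through \eqref{csztj2}, and the assumed $L^{p_0}$-bound on $w$ itself handles the remaining lower-order terms.

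Next I bootstrap the densities. In the variation-of-constants representation of $u$ I discard the dissipative part of the reaction: since $u,v\ge0$, one has $u(a_0-a_1u-a_2v-a_3\int_\Omega u-a_4\int_\Omega v)\le c_0 u$ with $c_0:=a_0+[(a_3)_-+(a_4)_-]C_1$ finite by Lemma \ref{l1yj}, so the reaction contributes only a term linear in $u$ that does not raise the integrability exponent. The chemotaxis term is handled by $\|e^{\sigma d_1\Delta}\nabla\cdot\phi\|_{L^{q}}\le C(1+\sigma^{-\frac12-\frac N2(\frac1r-\frac1q)})e^{-\nu\sigma}\|\phi\|_{L^{r}}$ combined with H\"older, $\|u\nabla w\|_{L^r}\le\|u\|_{L^s}\|\nabla w\|_{L^{q_1}}$ where $\frac1r=\frac1s+\frac1{q_1}$. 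Since $q_1>N$, each pass raises the integrability of $u$ by a fixed positive amount while keeping the singularity in $\sigma$ integrable, so after finitely many iterations $\sup_t\|u\|_{L^\infty}<\infty$, and likewise for $v$. With $u,v\in L^\infty$, the third equation together with the maximal Sobolev regularity of Lemma \ref{3.6} and standard parabolic smoothing yields $\sup_t\|w(\cdot,t)\|_{W^{1,\infty}(\Omega)}<\infty$. Consequently the quantity in the extensibility criterion of Lemma \ref{3.1} stays bounded, forcing $T_{\max}=\infty$, and all three $L^\infty$ bounds are uniform in $t$.

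Finally, for the H\"older regularity I would run one further parabolic bootstrap on slabs $\overline\Omega\times[t,t+1]$: once $u,v,w$ are uniformly bounded, the right-hand sides of the three equations are bounded in $L^\infty$, so parabolic $L^p$ estimates give uniform $W^{2,1}_p$ bounds; Sobolev embedding upgrades these to uniform $C^{\sigma,\sigma/2}$ bounds, after which the coefficients and source terms are H\"older continuous and parabolic Schauder estimates deliver the claimed $C^{2+\sigma,1+\frac\sigma2}$ bounds uniformly for $t\ge1$. I expect the genuine difficulty to lie in the first two steps: one must verify that $p_0>N/2$ is precisely the threshold making (i) some $q_1>N$ admissible for $\nabla w$ and (ii) the chemotaxis iteration gain a definite amount of integrability at each stage, so that the scheme terminates at $L^\infty$ after finitely many steps. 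The delicate bookkeeping is the integrability in $\sigma$ of the semigroup kernels near $\sigma=0$ at every iteration; by contrast the reaction terms are harmless because of their favorable sign and the a priori $L^1$ control.
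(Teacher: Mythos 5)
Your proposal is correct and follows essentially the same route as the paper: the paper's proof consists precisely of observing that the nonnegativity of the local competition terms and the $L^1$ bound \eqref{yijie} let one bound the (nonlocal) reaction by a $t$-independent constant, and then invoking the Neumann-heat-semigroup bootstrap of \cite[Lemma 2.6]{bai2016equilibration} (first $\nabla w\in L^{q_1}$ with $q_1>N$ using $p_0>N/2$, then the iterative gain of integrability for $u,v$, then parabolic regularity for the H\"older bounds), which is exactly what you spell out. Your reading that the hypothesis must also supply an $L^{p_0}$ bound for $v$ (omitted in the statement, but available in every application of the lemma) is the intended one.
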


\begin{proof}
In the first equation of \eqref{1}, by the nonnegativity of $a_2v$ and \eqref{yijie} we know
\begin{equation*}\begin{split}
u\left(a_0-a_1u-a_2v-a_3\int_\Om u-a_4\int_\Om v\right)&\leq u\left[a_0-a_1u+(a_3)_-C_1+(a_4)_-C_1\right]\\
&\leq\frac{\left[a_0+(a_3)_-C_1+(a_4)_-C_1\right]^2}{4a_1}.
\end{split}\end{equation*}
Similarly, we can control the last term in the second equation of \eqref{1} by a positive constant which is independent to $t$. Then the remaind proof is similar to that in \cite[Lemma 2.6]{bai2016equilibration}, to avoid repetition it is not described here.
\end{proof}

In order to use Lemma \ref{3.12} we further need some bounded estimates of solution with $L^p(\Om)$ norm. The following Gagliardo-Nirenberg inequality \cite{nirenberg1966extended} is useful for us.

\begin{lemma}{\label{3.t10}}
Suppose that $\kappa_1\in(0,\kappa_2)$, $1\leq\kappa_2, \kappa_3\leq\infty$ with $(N-\kappa_3)\kappa_2<N\kappa_3$. Then for any $\psi\in W^{1,\kappa_3}(\Omega)\cap L^{\kappa_1}(\Omega)$, there exists optimum constant $C_{GN}>0$ such that
\begin{equation*}
\begin{split}
||\psi||_{L^{\kappa_2}(\Omega)}\leq C_{GN}\cdot\left(||\nabla \psi||^{\lambda^{\ast}}_{L^{\kappa_3}(\Omega)}\cdot||\psi||^{1-\lambda^{\ast}}_{L^{\kappa_1}(\Omega)}+||\psi||_{L^{\kappa_1}(\Omega)}\right),
\end{split}
\end{equation*}
where
\begin{equation*}{\label{3.11}}
\begin{split}
\lambda^{\ast}=\frac{\frac{N}{\kappa_1}-\frac{N}{\kappa_2}}{1-\frac{N}{\kappa_3}+\frac{N}{\kappa_1}}\in(0,1).
\end{split}
\end{equation*}
\end{lemma}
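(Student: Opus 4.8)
The inequality is the classical Gagliardo--Nirenberg interpolation estimate, and the citation \cite{nirenberg1966extended} already supplies a complete argument; accordingly I would only lay out the structure that produces both the exponent $\lambda^*$ and the characteristic additive term. The plan is to first fix the value of $\lambda^*$ by a homogeneity consideration on the whole space, then establish the purely multiplicative estimate on $\mathbb{R}^N$ by combining a Sobolev embedding with H\"older interpolation, and finally transfer the result to the bounded domain $\Om$ by means of an extension operator, which is exactly the step responsible for the extra summand $\|\psi\|_{L^{\kappa_1}(\Om)}$.

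First I would determine $\lambda^*$. On $\mathbb{R}^N$ the candidate inequality $\|\psi\|_{L^{\kappa_2}}\leq C\|\nabla\psi\|_{L^{\kappa_3}}^{\lambda^*}\|\psi\|_{L^{\kappa_1}}^{1-\lambda^*}$ must be invariant under the dilation $\psi_\mu(x):=\psi(\mu x)$ for every $\mu>0$, since no term is privileged by a length scale. Counting the power of $\mu$ that each $L^q$ norm and the gradient norm acquire under this rescaling forces the identity $\frac{1}{\kappa_2}=\lambda^*\bigl(\frac{1}{\kappa_3}-\frac{1}{N}\bigr)+(1-\lambda^*)\frac{1}{\kappa_1}$, and solving it for $\lambda^*$ reproduces precisely the stated formula; the hypotheses $\kappa_1\in(0,\kappa_2)$ and $(N-\kappa_3)\kappa_2<N\kappa_3$ are exactly what guarantee $\lambda^*\in(0,1)$.

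Next I would prove the homogeneous bound on $\mathbb{R}^N$. In the subcritical range $\kappa_3<N$ the Sobolev--Gagliardo--Nirenberg embedding gives $\|\psi\|_{L^{\kappa_3^*}}\leq C\|\nabla\psi\|_{L^{\kappa_3}}$ with $\kappa_3^*=\frac{N\kappa_3}{N-\kappa_3}$, and the constraint $(N-\kappa_3)\kappa_2<N\kappa_3$ is nothing but $\kappa_2<\kappa_3^*$, so that $\kappa_2$ lies strictly between $\kappa_1$ and $\kappa_3^*$. The three-exponent H\"older interpolation inequality then yields $\|\psi\|_{L^{\kappa_2}}\leq\|\psi\|_{L^{\kappa_1}}^{1-\theta}\|\psi\|_{L^{\kappa_3^*}}^{\theta}$ with $\frac{1}{\kappa_2}=\frac{1-\theta}{\kappa_1}+\frac{\theta}{\kappa_3^*}$, and a direct computation checks that this $\theta$ equals the $\lambda^*$ found above; chaining the two estimates delivers the whole-space inequality. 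The remaining ranges $\kappa_3\geq N$, where the hypothesis on $\kappa_2$ becomes vacuous and $W^{1,\kappa_3}$ embeds into every $L^q$ with $q\in[\kappa_3,\infty]$, are handled by the same interpolation scheme against the appropriate endpoint embedding.

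Finally I would pass from $\mathbb{R}^N$ to $\Om$. Since $\partial\Om$ is smooth, there is a bounded linear extension operator $E:W^{1,\kappa_3}(\Om)\to W^{1,\kappa_3}(\mathbb{R}^N)$ with $\|E\psi\|_{L^q(\mathbb{R}^N)}\leq C\|\psi\|_{L^q(\Om)}$ and $\|\nabla E\psi\|_{L^{\kappa_3}(\mathbb{R}^N)}\leq C(\|\nabla\psi\|_{L^{\kappa_3}(\Om)}+\|\psi\|_{L^{\kappa_3}(\Om)})$; applying the whole-space inequality to $E\psi$, restricting back to $\Om$, and absorbing the lower-order contribution of the extension through an elementary Young step produces the additive form with the single extra term $\|\psi\|_{L^{\kappa_1}(\Om)}$. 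I expect this last transfer to be the main technical point: on a bounded domain the scaling invariance used in the second step is lost, so the clean multiplicative estimate cannot survive unaided, and keeping careful track of how the extension's lower-order norm collapses into $\|\psi\|_{L^{\kappa_1}(\Om)}$ rather than into a spurious $\|\psi\|_{L^{\kappa_3}(\Om)}$ is what requires the most care, together with the borderline exponents $\kappa_3\in\{N,\infty\}$ and $\kappa_2=\infty$ and the quasi-norm regime $\kappa_1<1$, where the interpolation must be read in the $(\int|\psi|^{\kappa_1})^{1/\kappa_1}$ sense.
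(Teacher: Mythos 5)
The paper gives no proof of this lemma at all: it is quoted as the classical Gagliardo--Nirenberg interpolation inequality with the citation to Nirenberg's 1966 paper, exactly as you note at the outset. Your sketch (scaling to fix $\lambda^{\ast}$, Sobolev embedding plus Lyapunov/H\"older interpolation on $\mathbb{R}^N$, then an extension operator to produce the additive $\|\psi\|_{L^{\kappa_1}(\Omega)}$ term on the bounded domain) is the standard argument and is sound, so there is nothing to reconcile with the paper beyond the shared appeal to the literature.
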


Applying the maximal Sobolev regularity (see \cite[Theorem 3.1]{matthias1997heat}), we next give a integral estimate of solution for the Neumann problem to inhomogeneous linear heat equation (see e.g. \cite[Lemma 2.1]{ishida2014boundedness}, \cite[Lemma 2.2]{li2018large} and \cite[Lemma 2.2]{yang2015boundedness}), which relates to the third equation in (\ref{1}), and we will use this result to solution element $w(x,t)$.

\begin{lemma}{\label{3.6}}
Let $r>1$ be a constant. Consider the following initial-boundary value problem
\begin{eqnarray}{\label{3.7}}
\left\{
\begin{array}{llll}
\zeta_t=d_3\Delta \zeta-\lambda\zeta+g,\quad &(x,t)\in\Omega\times(0,T),\\
\frac{\partial\zeta}{\partial\nu}=0,\quad &(x,t)\in\partial\Omega\times(0,T),\\
\zeta(x, 0)=\zeta_0(x),\quad &x\in\Omega,
\end{array}
\right.
\end{eqnarray}
then for any $\zeta_{0}(x)\in W^{2,r}(\Omega) (r>N)$ with $\frac{\partial\zeta_{0}(x)}{\partial\nu}=0$ on $\partial\Omega$ and $g(x, t)\in L^{r}((0,T); L^{r}(\Omega))$, problem \eqref{3.7} admits a unique classical solution. Moreover, for any $s_{0}\in[0,T)$, there exists constant $C_{r}>0$ such that for all $s\in(s_0, T)$ we have
\begin{equation*}
\begin{split}
&\int_{s_{0}}^{T}e^{\lambda rs}||\Delta \zeta(\cdot,s)||_{L^{r}(\Omega)}^{r}\\
&\leq C_{r}\int_{s_{0}}^{T}e^{\lambda rs}||g(\cdot,s)||_{L^{r}(\Omega)}^{r}+C_{r}e^{\lambda rs_{0}}\left(||\zeta(\cdot,s_{0})||_{L^{r}(\Omega)}^{r}+||\Delta \zeta(\cdot,s_{0})||_{L^{r}(\Omega)}^{r}\right).
\end{split}
\end{equation*}
\end{lemma}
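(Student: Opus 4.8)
The plan is to reduce the statement to the classical maximal $L^r$ parabolic regularity for the Neumann heat semigroup established in \cite{matthias1997heat}, after first removing the zeroth-order term $-\lambda\zeta$ and transferring the exponential weight $e^{\lambda r s}$ into the data. Existence and uniqueness of the classical solution follow from standard analytic semigroup theory: the realization $A$ of $-d_3\Delta$ under homogeneous Neumann boundary conditions generates an analytic semigroup on $L^r(\Om)$, so for $\zeta_0\in W^{2,r}(\Om)$ compatible with the boundary condition and $g\in L^r((0,T);L^r(\Om))$ the mild solution furnished by the variation-of-constants formula is in fact a strong, hence classical, solution.

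For the estimate I would introduce the substitution $\eta(x,s):=e^{\lambda s}\zeta(x,s)$. A direct differentiation shows that the factor $e^{\lambda s}$ exactly cancels the dissipative term, so that $\eta$ solves the unweighted problem
\[
\eta_s = d_3\Delta\eta + e^{\lambda s}g,\qquad \tfrac{\partial\eta}{\partial\nu}=0 \ \text{on}\ \partial\Om,\qquad \eta(\cdot,s_0)=e^{\lambda s_0}\zeta(\cdot,s_0).
\]
Applying the maximal Sobolev regularity estimate of \cite[Theorem 3.1]{matthias1997heat} (in the form with vanishing zeroth-order coefficient) to $\eta$ on the interval $(s_0,T)$ yields
\[
\int_{s_0}^T \|\Delta\eta(\cdot,s)\|_{L^r(\Om)}^r \leq C_r\int_{s_0}^T \|e^{\lambda s}g(\cdot,s)\|_{L^r(\Om)}^r + C_r\left(\|\eta(\cdot,s_0)\|_{L^r(\Om)}^r + \|\Delta\eta(\cdot,s_0)\|_{L^r(\Om)}^r\right).
\]
Since $\Delta\eta = e^{\lambda s}\Delta\zeta$ pointwise, one has $\|\Delta\eta(\cdot,s)\|_{L^r}^r = e^{\lambda r s}\|\Delta\zeta(\cdot,s)\|_{L^r}^r$ and $\|e^{\lambda s}g(\cdot,s)\|_{L^r}^r = e^{\lambda r s}\|g(\cdot,s)\|_{L^r}^r$; substituting $\eta(\cdot,s_0)=e^{\lambda s_0}\zeta(\cdot,s_0)$ into the last two terms produces the factor $e^{\lambda r s_0}$ in front of $\|\zeta(\cdot,s_0)\|_{L^r}^r + \|\Delta\zeta(\cdot,s_0)\|_{L^r}^r$. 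This is precisely the claimed inequality.

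The step that needs care is the initial-data contribution. The abstract maximal regularity theorem controls the solution not directly by $\|\eta(\cdot,s_0)\|_{L^r}$ but by the norm of $\eta(\cdot,s_0)$ in the real interpolation (trace) space $(L^r(\Om), D(A))_{1-\frac1r,\,r}$. Thus I must check that this trace norm is dominated by $\|\eta(\cdot,s_0)\|_{L^r}^r + \|\Delta\eta(\cdot,s_0)\|_{L^r}^r$. This follows from the continuous embedding $D(A)\hookrightarrow (L^r(\Om), D(A))_{1-\frac1r,\,r}$ together with the equivalence of the graph norm of $A$ with $\|\cdot\|_{L^r}+\|\Delta\,\cdot\|_{L^r}$; note that $\eta(\cdot,s_0)\in D(A)$ because $\zeta(\cdot,s_0)$ lies in $W^{2,r}(\Om)$ and satisfies the compatibility condition (for $s_0>0$ this is automatic from parabolic smoothing, while for $s_0=0$ it is the hypothesis on $\zeta_0$). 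Everything else is a routine rescaling, so this verification of the trace bound is the only genuinely delicate point.
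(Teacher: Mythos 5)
The paper does not prove this lemma at all: it is quoted as a known auxiliary result, with the maximal Sobolev regularity attributed to \cite[Theorem 3.1]{matthias1997heat} and the weighted form to \cite[Lemma 2.1]{ishida2014boundedness}, \cite[Lemma 2.2]{li2018large} and \cite[Lemma 2.2]{yang2015boundedness}. Your argument --- the substitution $\eta=e^{\lambda s}\zeta$ to absorb the $-\lambda\zeta$ term, followed by maximal $L^{r}$ regularity for the Neumann heat semigroup, with the initial-data contribution controlled via the embedding $D(A)\hookrightarrow\bigl(L^{r}(\Omega),D(A)\bigr)_{1-\frac1r,\,r}$ and the equivalence of the graph norm with $\|\cdot\|_{L^{r}}+\|\Delta\cdot\|_{L^{r}}$ --- is precisely the standard derivation used in those cited sources, and it is correct.
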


The following lemma gives us the boundedness of solution with $L^2(\Om)$ norm when $N=2$.

\begin{lemma}
Assume that $(u, v, w)$ is the solution of problem \eqref{1}, $\Om\subset\mathbb{R}^2$ and condition \eqref{lazx} holds. Then for all $t\in(0, T_{\max})$ there exists positive constant $C$ such that
\begin{equation}\label{yijie3}
\int_\Omega\left(u^2+v^2\right)\leq C.
\end{equation}
\end{lemma}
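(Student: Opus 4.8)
The plan is to derive a single differential inequality for $y(t):=\int_\Om(u^2+v^2)$ and then close it by a uniform Gr\"onwall argument that exploits the three space--time integral bounds already in hand. First I would test the first equation of \eqref{1} with $u$ and the second with $v$, integrate by parts, and add. Using $\partial_\nu w=0$, the two chemotaxis terms become $-\tfrac{\chi_1}{2}\int_\Om u^2\Delta w$ and $-\tfrac{\chi_2}{2}\int_\Om v^2\Delta w$, while the diffusion produces the good terms $-d_1\int_\Om|\nabla u|^2-d_2\int_\Om|\nabla v|^2$. For the reaction terms I would discard the manifestly nonpositive contributions $-a_1\int_\Om u^3$, $-a_2\int_\Om u^2v$, $-b_1\int_\Om uv^2$, $-b_2\int_\Om v^3$ and estimate the nonlocal ones via the $L^1$ bound \eqref{yijie}: since $\int_\Om u,\int_\Om v\le C_1$, one has $-a_3\int_\Om u\int_\Om u^2-a_4\int_\Om v\int_\Om u^2\le\bigl((a_3)_-+(a_4)_-\bigr)C_1\int_\Om u^2$, and similarly for $v$. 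This yields
\[
\tfrac12\frac{d}{dt}\!\int_\Om(u^2+v^2)+d_1\!\int_\Om|\nabla u|^2+d_2\!\int_\Om|\nabla v|^2\le \tfrac{\chi_1}{2}\!\int_\Om u^2|\Delta w|+\tfrac{\chi_2}{2}\!\int_\Om v^2|\Delta w|+C\!\int_\Om(u^2+v^2).
\]

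The crux is to absorb the chemotaxis integrals, and here the dimension $N=2$ is decisive. By Cauchy--Schwarz $\tfrac{\chi_1}{2}\int_\Om u^2|\Delta w|\le\tfrac{\chi_1}{2}\|u\|_{L^4(\Om)}^2\|\Delta w\|_{L^2(\Om)}$, and Lemma \ref{3.t10} with $\kappa_1=\kappa_3=2,\ \kappa_2=4$ (so that $\lambda^*=\tfrac12$) gives $\|u\|_{L^4(\Om)}^2\le C\bigl(\|\nabla u\|_{L^2(\Om)}\|u\|_{L^2(\Om)}+\|u\|_{L^2(\Om)}^2\bigr)$. A Young inequality then splits the product into a term $\tfrac{d_1}{2}\|\nabla u\|_{L^2(\Om)}^2$ that is absorbed by the diffusion and a remainder of the form $C\bigl(1+\|\Delta w\|_{L^2(\Om)}^2\bigr)\|u\|_{L^2(\Om)}^2$; the $v$--term is handled identically. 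After absorbing the gradients I arrive at
\[
\frac{d}{dt}\!\int_\Om(u^2+v^2)\le C\bigl(1+\|\Delta w\|_{L^2(\Om)}^2\bigr)\!\int_\Om(u^2+v^2).
\]

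To close this, write $y(t)=\int_\Om(u^2+v^2)$ and $g(t)=C\bigl(1+\|\Delta w(\cdot,t)\|_{L^2(\Om)}^2\bigr)$. The bound \eqref{c6} shows $\int_t^{t+\tau}g\le C\tau+CC_6=:A$ is finite and uniform in $t$, while \eqref{vds} gives $\int_t^{t+\tau}y\le C_2$. A uniform Gr\"onwall argument in the spirit of Lemma \ref{2222} --- integrating $y'\le g\,y$ from $s$ to $t+\tau$ to obtain $y(t+\tau)\le y(s)e^{A}$ and then averaging over $s\in[t,t+\tau]$ --- yields $y(t+\tau)\le (C_2/\tau)e^{A}$; combining this with the elementary bound $y(t)\le y(0)e^{A}$ on $[0,\tau]$ gives \eqref{yijie3} uniformly on $(0,T_{\max})$.

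The main obstacle is the chemotaxis term: the coefficient $g(t)$ multiplying $y$ is not bounded pointwise in $t$, so a naive Gr\"onwall fails. The resolution is that both $g$ and $y$ are only integrable over time windows of length $\tau$, and it is precisely the uniform Gr\"onwall mechanism that converts these two space--time integral estimates into a pointwise-in-time $L^2$ bound. The two-dimensional Gagliardo--Nirenberg exponent $\lambda^*=\tfrac12$ is what makes the power of $\|\Delta w\|_{L^2(\Om)}$ in the remainder equal to $2$, matching exactly the integrability furnished by \eqref{c6}; in higher dimensions this balance breaks down, which is why the estimate is confined to $N=2$.
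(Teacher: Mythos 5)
Your proposal is correct and follows essentially the same route as the paper: test with $u$ (and $v$), convert the chemotaxis term to $-\tfrac{\chi_i}{2}\int_\Om (\cdot)^2\Delta w$, estimate it via Cauchy--Schwarz, the two-dimensional Gagliardo--Nirenberg inequality with $\lambda^*=\tfrac12$ and Young's inequality to absorb the gradient terms, and then close the resulting differential inequality $y'\le C(1+\|\Delta w\|_{L^2(\Om)}^2)\,y+C$ using the space--time bounds \eqref{vds} and \eqref{c6}. The only cosmetic differences are that the paper bounds the reaction contribution by a constant via the elementary cubic estimate $\int_\Om u^2(A-a_1u)\le \tfrac{4A^3}{27a_1^2}|\Om|$ rather than keeping a linear term, and closes by picking a good starting time $t_0\in[t-\tau,t]$ from \eqref{vds} instead of your equivalent averaging step.
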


\begin{proof}
Testing the first equation of \eqref{1} against $u$ and integrating by parts
\begin{equation*}\begin{split}
\frac{1}{2}\frac{d}{dt}\int_\Om u^2&=-d_1\int_\Omega |\nabla u|^2+\chi_1\int_\Omega u\nabla v\cdot\nabla w+a_0\int_\Om u^2\\
&\quad\ -\int_\Om u^2\left[a_1u+a_2v+a_3\int_\Om u+a_4\int_\Om v\right],
\end{split}\end{equation*}
which togethers with \eqref{yijie} and the assumptions $a_2\geq0$ implies
\begin{equation}\label{9c}
\frac{1}{2}\frac{d}{dt}\int_\Om u^2\leq-d_1\int_\Omega |\nabla u|^2+\chi_1\int_\Omega u\nabla v\cdot\nabla w+\int_\Om u^2\left[a_0+(a_3)_-C_1+(a_4)_-C_1-a_1u\right].
\end{equation}
It is easy to see that
\begin{equation*}
\int_\Om u^2\left[a_0+(a_3)_-C_1+(a_4)_-C_1-a_1u\right]\leq c_1:=\frac{4\left[a_0+(a_3)_-C_1+(a_4)_-C_1\right]^3}{27a_1^2}|\Om|.
\end{equation*}
On the other hand, once more integrating by parts and using the Cauchy-Schwarz inequality we find that
\begin{equation}\label{hae}
\chi_1\int_\Omega u\nabla v\cdot\nabla w=-\frac{\chi_1}{2}\int_\Omega u^2\Delta w\leq\frac{\chi_1}{2}\|u\|^2_{L^4(\Om)}\|\Delta w\|_{L^2(\Om)}.
\end{equation}
Let $\psi=u, \kappa_2=4, \kappa_3=\kappa_1=2$ and $N=2$ in Lemma \ref{3.t10}, then
\begin{equation*}
\|u\|^2_{L^4(\Om)}\leq (2C_{GN})^2\left(\|\nabla u\|_{L^2(\Om)}\|u\|_{L^2(\Om)}+\|u\|_{L^2(\Om)}^2\right),
\end{equation*}
which combines \eqref{hae} and the Young inequality imply that there exists positive constant $c_2$ such that
\begin{equation*}\begin{split}
\chi_1\int_\Omega u\nabla v\cdot\nabla w&\leq2\chi_1C^2_{GN}\left(\|\nabla u\|_{L^2(\Om)}\|u\|_{L^2(\Om)}+\|u\|_{L^2(\Om)}^2\right)\|\Delta w\|_{L^2(\Om)}\\
&\leq d_1\int_\Omega |\nabla u|^2+c_2\|u\|_{L^2(\Om)}^2\left(\|\Delta w\|_{L^2(\Om)}^2+1\right).
\end{split}\end{equation*}
Then, \eqref{9c} gives us
\begin{equation}\label{jif}
\frac{d}{dt}\int_\Om u^2\leq 2c_2\left(\|\Delta w\|_{L^2(\Om)}^2+1\right)\left(\int_\Om u^2\right)+2c_1.
\end{equation}
We next fix $t\in(0, T_{\max})$, and then obtain from \eqref{vds} that there exists $t_0\in [0, T_{\max})$ such that $t-\tau\leq t_0\leq t$ and
\begin{equation*}
\int_\Om u(x, t_0)^2\leq c_3:=\max\left\{C_2, \ \int_\Om u_0^2\right\},
\end{equation*}
so an integration of \eqref{jif} over $(t_0, t)$ shows that
\begin{equation*}
\int_\Om u^2\leq\left(\int_\Om u^2(x, t_0)\right)\cdot e^{2c_2\int_{t_0}^t\left(\|\Delta w\|_{L^2(\Om)}^2+1\right)}+2c_1\int_{t_0}^te^{2c_2\int_{s}^t\left(\|\Delta w\|_{L^2(\Om)}^2+1\right)},
\end{equation*}
which togethers with \eqref{c6} and the fact $t-t_0\leq\tau\leq1$ lead to \eqref{yijie3} immediately.
\end{proof}

For the case $N\geq3$, we have following $L^p(\Om)$-norm estimate for any $p\geq1$.

\begin{lemma}{\label{4.1}}
For any $p>N$ and the solution of problem \eqref{1}, if condition \eqref{tj1ha} holds, then there exists a positive constant $C$ such that for all $t\in(0, T_{\max})$
\begin{equation*}
\int_{\Omega}\left(u^{p}+v^{p}\right)\leq C.
\end{equation*}
\end{lemma}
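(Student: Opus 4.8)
The plan is to derive a differential inequality for the coupled quantity $\int_\Omega(u^p+v^p)$ and absorb the chemotactic cross terms using the maximal Sobolev regularity estimate of Lemma \ref{3.6}. First I would test the first equation of \eqref{1} against $u^{p-1}$ and the second against $v^{p-1}$, integrate by parts, and sum. The diffusion terms produce the good dissipative contributions $-d_1(p-1)\int_\Omega u^{p-2}|\nabla u|^2$ and $-d_2(p-1)\int_\Omega v^{p-2}|\nabla v|^2$, which I would rewrite in terms of $\int_\Omega|\nabla u^{p/2}|^2$ and $\int_\Omega|\nabla v^{p/2}|^2$. The chemotactic terms, after integration by parts, take the form $\chi_1(p-1)\int_\Omega u^{p-1}\nabla u\cdot\nabla w = -\tfrac{\chi_1(p-1)}{p}\int_\Omega u^p\Delta w$ (and similarly for $v$), so the combined contribution of the cross-diffusion is controlled by $\tfrac{(p-1)}{p}\int_\Omega(\chi_1 u^p+\chi_2 v^p)|\Delta w|$. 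Meanwhile the local reaction terms $-a_1\int_\Omega u^{p+1}$ and $-b_2\int_\Omega v^{p+1}$ supply the crucial superlinear absorption, while the nonlocal terms, bounded below via \eqref{yijie} and the definitions of $(a_3)_-,\dots,(b_4)_-$, contribute lower-order quantities of the form $(a_3)_-C_1\int_\Omega u^p$ etc.

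The core of the argument is estimating the chemotactic term $\int_\Omega u^p|\Delta w|$. Here I would apply Young's inequality to split it against $\int_\Omega u^{p+1}$ and $\int_\Omega|\Delta w|^{p+1}$, so that the first piece is swallowed by the $-a_1\int_\Omega u^{p+1}$ reaction term precisely under the largeness hypothesis \eqref{tj1ha} on $a_1$ (and $b_2$). The remaining $\int_\Omega|\Delta w|^{p+1}$-type quantity is where Lemma \ref{3.6} enters: applying the maximal regularity estimate to the third equation with $g=ku+lv$, the time-integral of $e^{\lambda(p+1)s}\|\Delta w\|_{L^{p+1}}^{p+1}$ is dominated by $C_{p}\int e^{\lambda(p+1)s}\|ku+lv\|_{L^{p+1}}^{p+1}$ plus initial data, which reproduces the factor $(2k)^{p+1}(\chi_1+\chi_2)C_p$ and $(2l)^{p+1}(\chi_1+\chi_2)C_p$ appearing in \eqref{tj1ha}. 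The precise bookkeeping of these constants is what forces the exact form of the threshold condition, and matching them up is the delicate accounting step.

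Once these estimates are assembled, I would arrive at a differential inequality of the form
\begin{equation*}
\frac{d}{dt}\int_\Omega(u^p+v^p)+\int_\Omega(u^p+v^p)\leq C\int_\Omega\bigl(|\Delta w|^{p+1}+1\bigr)+(\text{absorbed terms}),
\end{equation*}
where all genuinely chemotactic contributions have been absorbed into the negative $a_1,b_2$ reaction terms thanks to \eqref{tj1ha}. To close the argument I would combine this with the maximal regularity estimate in a coupled (integrated-in-time) fashion — forming a functional like $\int_\Omega(u^p+v^p)$ together with a suitable time integral of $\|\Delta w\|_{L^{p+1}}^{p+1}$ — and conclude via a Grönwall-type or ODE comparison argument using the $L^1$ and space–time $L^2$ bounds of Lemma \ref{l1yj} to handle the lower-order remainders.

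The main obstacle I anticipate is the coupling between the $L^p$ bound on $(u,v)$ and the regularity estimate for $\Delta w$: the bound on $\int_\Omega u^p$ requires control of $\|\Delta w\|_{L^{p+1}}$, which in turn depends through Lemma \ref{3.6} on $\|u\|_{L^{p+1}}$. Breaking this circularity is exactly what the smallness of the chemotactic coefficient relative to $a_1,b_2$ buys us — the gain from the superlinear reaction term must strictly outweigh the loss from the maximal-regularity constant, and verifying that the constants in \eqref{tj1ha} are sharp enough to make the absorbed coefficient strictly negative is the technical heart of the proof.
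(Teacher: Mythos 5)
Your proposal is correct and follows essentially the same route as the paper: testing with $u^{p-1}$, converting the chemotactic term to $-\tfrac{(p-1)\chi_1}{p}\int_\Omega u^p\Delta w$, splitting it by Young's inequality against $\int_\Omega u^{p+1}$ and $\int_\Omega|\Delta w|^{p+1}$, invoking Lemma \ref{3.6} with $g=ku+lv$ (whence the $(2k)^{p+1}$, $(2l)^{p+1}$ factors), and absorbing everything into the $-a_1\int_\Omega u^{p+1}$, $-b_2\int_\Omega v^{p+1}$ terms under \eqref{tj1ha} via an exponentially weighted (variation-of-constants) time integration. The only cosmetic difference is that the paper controls the nonlocal terms by Young and H\"older (producing the $|\Omega|^{1/p}$ and $|\Omega|^{p}$ contributions that appear explicitly in \eqref{tj1ha}) rather than by the $L^1$ bound \eqref{yijie}.
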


\begin{proof}
Multiplying the first equation of \eqref{1} by $u^{p-1}$, integrating by parts over $\Omega$ and using the assumptions $a_2\geq0$, \eqref{yijie} and the Young inequality, we obtain
\begin{equation}\label{4.4}
\begin{split}
\frac{1}{p}\frac{d}{dt}\int_{\Omega}u^{p}&\leq-d_1(p-1)\int_{\Omega}u^{p-2}|\nabla u|^{2}+(p-1)\chi_{1}\int_{\Omega}u^{p-1}\nabla u\cdot\nabla w\\
&\quad+\int_{\Omega}u^{p}\left(a_0-a_1u-a_3\int_\Om u-a_4\int_\Om v\right)\\
&\leq-\frac{(p-1)\chi_{1}}{p}\int_{\Omega}u^{p}\Delta w+\int_{\Omega}u^{p}\left(a_0-a_1u-a_3\int_\Om u-a_4\int_\Om v\right)\\
&\leq\frac{(p-1)\chi_{1}}{p}\left[\frac{p}{p+1}\int_{\Omega}u^{p+1}+\frac{1}{p+1}\int_{\Omega}|\Delta w|^{p+1}\right]\\
&\quad+a_0\int_{\Omega}u^{p}-a_1\int_{\Omega}u^{p+1}+(a_3)_-\int_{\Omega}u^{p}\int_\Om u+(a_4)_-\int_{\Omega}u^{p}\int_\Om v.
\end{split}
\end{equation}
By the Young and H\"{o}lder inequality we further know
\begin{equation*}\begin{split}
(a_3)_-\int_{\Omega}u^{p}\int_\Om u&\leq\frac{p(a_3)_-}{p+1}\left(\int_{\Omega}u^{p}\right)^{\frac{p+1}{p}}+\frac{(a_3)_-}{p+1}\left(\int_{\Omega}u\right)^{p+1}\\
&\leq\frac{p(a_3)_-}{p+1}|\Om|^{\frac{1}{p}}\int_{\Omega}u^{p+1}+\frac{(a_3)_-}{p+1}|\Om|^{p}\int_{\Omega}u^{p+1}.
\end{split}\end{equation*}
Similarly,
\begin{equation*}\begin{split}
(a_4)_-\int_{\Omega}u^{p}\int_\Om v\leq\frac{p(a_4)_-}{p+1}|\Om|^{\frac{1}{p}}\int_{\Omega}u^{p+1}+\frac{(a_4)_-}{p+1}|\Om|^{p}\int_{\Omega}v^{p+1}.
\end{split}\end{equation*}
Hence, adding $\lambda\int_{\Omega}u^{p}$ on both sides of (\ref{4.4}) implies
\begin{equation*}
\begin{split}
&\frac{1}{p}\frac{d}{dt}\int_{\Omega}u^{p}+\lambda\int_{\Omega}u^{p}\\
&\leq\frac{(p-1)\chi_{1}}{p(p+1)}\int_{\Omega}|\Delta w|^{p+1}+\left(a_0+\lambda\right)\int_{\Omega}u^{p}\\
&\quad+\frac{1}{p+1}\left((p-1)\chi_{1}+p[(a_3)_-+(a_4)_-]|\Om|^{\frac{1}{p}}+(a_3)_-|\Om|^{p}-a_1\right)\int_{\Omega}u^{p+1}\\
&\quad+\frac{(a_4)_-}{p+1}|\Om|^{p}\int_{\Omega}v^{p+1},
\end{split}
\end{equation*}
then,
\begin{equation}\label{4.5}
\begin{split}
&\frac{d}{dt}\int_{\Omega}u^{p}+p\lambda\int_{\Omega}u^{p}\\
&\leq\chi_{1}\int_{\Omega}|\Delta w|^{p+1}+p\left(a_0+\lambda\right)\int_{\Omega}u^{p}+(a_4)_-|\Om|^{p}\int_{\Omega}v^{p+1}\\
&\quad+\left((p-1)\chi_{1}+p[(a_3)_-+(a_4)_-]|\Om|^{\frac{1}{p}}+(a_3)_-|\Om|^{p}-a_1\right)\int_{\Omega}u^{p+1}.
\end{split}
\end{equation}
For any fixed $s_{0}\in[0,T)$, use the variation of constants formula to (\ref{4.5}) for $s\in[s_0, t]\subset[0,T)$, we derive
\begin{equation}\label{4.6}
\begin{split}
\int_{\Omega}u^{p}&\leq e^{-p\lambda(t-s_{0})}\int_{\Omega}u^{p}(s_{0})+\chi_{1}\int^{t}_{s_{0}}e^{-p\lambda(t-s)}\int_{\Omega}|\Delta w|^{p+1}\\
&\quad+\int^{t}_{s_{0}}e^{-p\lambda(t-s)}\bigg[p\left(a_0+\lambda\right)\int_{\Omega}u^{p}+(a_4)_-|\Om|^{p}\int_{\Omega}v^{p+1}\bigg]\\
&\quad+\left((p-1)\chi_{1}+p[(a_3)_-+(a_4)_-]|\Om|^{\frac{1}{p}}+(a_3)_-|\Om|^{p}-a_1\right)\int^{t}_{s_{0}}e^{-p\lambda(t-s)}\int_{\Omega}u^{p+1}.
\end{split}
\end{equation}
Similarly, for element $v$ we can obtain
\begin{equation}\label{4.7}
\begin{split}
\int_{\Omega}v^{p}&\leq e^{-p\lambda(t-s_{0})}\int_{\Omega}v^{p}(s_{0})+\chi_{2}\int^{t}_{s_{0}}e^{-p\lambda(t-s)}\int_{\Omega}|\Delta w|^{p+1}\\
&\quad+\int^{t}_{s_{0}}e^{-p\lambda(t-s)}\bigg[p\left(b_0+\lambda\right)\int_{\Omega}v^{p}+(b_3)_-|\Om|^{p}\int_{\Omega}u^{p+1}\bigg]\\
&\quad+\left((p-1)\chi_{2}+p[(b_4)_-+(b_3)_-]|\Om|^{\frac{1}{p}}+(b_4)_-|\Om|^{p}-b_2\right)\int^{t}_{s_{0}}e^{-p\lambda(t-s)}\int_{\Omega}v^{p+1}.
\end{split}
\end{equation}
Combining (\ref{4.6}) and (\ref{4.7}),
\begin{equation}\label{4.8}
\begin{split}
&\int_{\Omega}\left(u^{p}+v^{p}\right)\\
&\leq e^{-p\lambda(t-s_{0})}\int_{\Omega}\left(u^{p}(s_{0})+v^{p}(s_{0})\right)+(\chi_1+\chi_{2})\int^{t}_{s_{0}}e^{-p\lambda(t-s)}\int_{\Omega}|\Delta w|^{p+1}\\
&\quad+\int^{t}_{s_{0}}e^{-p\lambda(t-s)}\bigg[p\left(a_0+\lambda\right)\int_{\Omega}u^{p}+p\left(b_0+\lambda\right)\int_{\Omega}v^{p}\bigg]\\
&\quad+\left((p-1)\chi_{1}+p[(a_3)_-+(a_4)_-]|\Om|^{\frac{1}{p}}+[(a_3)_-+(b_3)_-]|\Om|^{p}-a_1\right)\int^{t}_{s_{0}}e^{-p\lambda(t-s)}
\int_{\Omega}u^{p+1}\\
&\quad+\left((p-1)\chi_{2}+p[(b_4)_-+(b_3)_-]|\Om|^{\frac{1}{p}}+[(a_4)_-+(b_4)_-]|\Om|^{p}-b_2\right)\int^{t}_{s_{0}}e^{-p\lambda(t-s)}
\int_{\Omega}v^{p+1}.
\end{split}
\end{equation}
Let $\zeta=w, r=p>N$ and $g=ku+lv$ in Lemma \ref{3.6}, we know there exists $C_{p+1}>0$ such that
\begin{equation}\label{4.9}
\begin{split}
&(\chi_1+\chi_{2})\int^{t}_{s_{0}}e^{-p\lambda(t-s)}\int_{\Omega}|\Delta w|^{p+1}\\
&\leq(\chi_1+\chi_{2})C_{p}\int^{t}_{s_{0}}\int_{\Omega}e^{-p\lambda(t-s)}(ku+lv)^{p+1}\\
&\quad +(\chi_1+\chi_{2})C_{p}e^{p\lambda s_{0}}\left(\|w(\cdot,s_{0})\|_{L^{p+1}(\Omega)}^{p+1}+\|\Delta w(\cdot,s_{0})\|_{L^{p+1}(\Omega)}^{p+1}\right).
\end{split}
\end{equation}
Taking (\ref{4.9}) into (\ref{4.8}), we have
\begin{equation}\label{4.11}
\begin{split}
&\int_{\Omega}u^{p}+\int_{\Omega}v^{p}\\
&\leq e^{-p\lambda(t-s_{0})}\int_{\Omega}\left(u^{p}(s_{0})+v^{p}(s_{0})\right)\\
&\quad+(\chi_1+\chi_{2})C_{p+1}e^{p\lambda s_{0}}\left(\|w(\cdot,s_{0})\|_{L^{p+1}(\Omega)}^{p+1}+\|\Delta w(\cdot,s_{0})\|_{L^{p+1}(\Omega)}^{p+1}\right)\\
&\quad+\int^{t}_{s_{0}}e^{-p\lambda(t-s)}\bigg[p\left(a_0+\lambda\right)\int_{\Omega}u^{p}+p\left(b_0+\lambda\right)\int_{\Omega}v^{p}\bigg]\\
&\quad+\int^{t}_{s_{0}}e^{-p\lambda(t-s)}\bigg[K_1\int_{\Omega}u^{p+1}+K_2\int_{\Omega}v^{p+1}\bigg],
\end{split}
\end{equation}
where
\begin{equation*}\begin{split}
K_1&:=(p-1)\chi_{1}+p[(a_3)_-+(a_4)_-]|\Om|^{\frac{1}{p}}+[(a_3)_-+(b_3)_-]|\Om|^{p}-a_1+(\chi_1+\chi_{2})C_{p}(2k)^{p+1},\\
K_2&:=(p-1)\chi_{2}+p[(b_4)_-+(b_3)_-]|\Om|^{\frac{1}{p}}+[(a_4)_-+(b_4)_-]|\Om|^{p}-b_2+(\chi_1+\chi_{2})C_{p}(2l)^{p+1}.
\end{split}\end{equation*}
By application of the Young inequality we know that there exist arbitrarily small and positive constants $\tau_1, \tau_2$, and respectively, positive constants $\bar C(\tau_1)$ and $\tilde{C}(\tau_2)$ such that
\begin{equation*}\begin{split}
p\left(a_0+\lambda\right)\int_{\Omega}u^{p}&\leq \tau_1\int_{\Omega}u^{p+1}+\bar C(\tau_1)|\Omega|,\\
p\left(b_0+\lambda\right)\int_{\Omega}v^{p}&\leq \tau_2\int_{\Omega}v^{p+1}+\tilde{C}(\tau_2)|\Omega|,
\end{split}\end{equation*}
so \eqref{4.11} becomes
\begin{equation*}\begin{split}
&\int_{\Omega}u^{p}+\int_{\Omega}v^{p}\\
&\leq e^{-p\lambda(t-s_{0})}\int_{\Omega}\left(u^{p}(s_{0})+v^{p}(s_{0})\right)\\
&\quad+(\chi_1+\chi_{2})C_{p}e^{p\lambda s_{0}}\left(\|w(\cdot,s_{0})\|_{L^{p+1}(\Omega)}^{p+1}+\|\Delta w(\cdot,s_{0})\|_{L^{p+1}(\Omega)}^{p+1}\right)\\
&\quad+\int^{t}_{s_{0}}e^{-p\lambda(t-s)}\left[\left(K_1+\tau_1\right)\int_{\Omega}u^{p+1}+\left(K_2+\tau_2\right)\int_{\Omega}v^{p+1}\right]\\
&\quad+\left(\bar{C}(\tau_1)+\tilde{C}(\tau_2)\right)|\Om|\int^{t}_{s_{0}}e^{-p\lambda(t-s)}.
\end{split}\end{equation*}
It follows from the condition \eqref{tj1ha} that $K_1+\tau_1\leq0$ and $K_2+\tau_2\leq0$. We further note here that integral $\int^{t}_{s_{0}}e^{-p\lambda(t-s)}$ can be controlled by a positive constant which is independent to $t$. Hence, above inequality completes the proof immediately.
\end{proof}

Now, we are ready to prove Theorem \ref{s}.

\begin{proof}[Proof of Theorem \ref{s}]
For the case $N=1$, by \eqref{yijie} and Lemma \ref{3.12} we know the corresponding solution is bounded. We further know from \eqref{yijie3} and Lemma \ref{3.12} that the solution of problem \eqref{1} is globally bounded for $N=2$. Similarly, for the solution with $N\geq3$, the conclusion also holds due to Lemma \ref{4.1} and Lemma \ref{3.12}.
\end{proof}

\section{Stability of solutions}

In this section, we consider the stability of constant stationary solution to problem \eqref{1}. We first state the globally asymptotically stability of the constant solution $(u^*, v^*, w^*)$ to problem \eqref{1} under the weak competition case.

\begin{theorem}\label{33}
Assume $a_i, b_i>0, i=3, 4$, $(u^*, v^*, w^*)$ be given by \eqref{0.77} and \eqref{rjz} hold. Let
\begin{equation}\label{mzx}\begin{split}
\varpi_1&:=a_1-a_4|\Om|-b_3|\Om|,\\
\varpi_2&:=b_2-a_4|\Om|-b_3|\Om|,\\
\varpi_3&:=\frac{d_1a_2\chi_2^2v^*+d_2b_1\chi_1^2u^*}{16d_1d_2d_3a_2b_1\lambda}.
\end{split}\end{equation}
If
\begin{equation}\label{432}
b_1l^2\varpi_1+a_2k^2\varpi_2>2a_2b_1kl,
\end{equation}
and
\begin{equation}\label{tjas}
\varpi_1\varpi_2>a_2b_1+\left(b_1l^2\varpi_1+a_2k^2\varpi_2-2a_2b_1kl\right)\varpi_3,
\end{equation}
then $(u^*, v^*, w^*)$ is globally asymptotically stable with respect to problem \eqref{1} in the sense that
\begin{equation*}
\begin{split}
\lim_{t\rightarrow +\infty}\left(\|u(\cdot,t)-u^*\|_{L^{\infty}(\Omega)}+\|v(\cdot,t)-v^*\|_{L^{\infty}(\Omega)}+\|w(\cdot,t)-w^*\|_{L^{\infty}(\Omega)}\right)=0.
\end{split}
\end{equation*}
\end{theorem}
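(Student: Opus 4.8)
The plan is to run a Lyapunov--functional argument built on the global boundedness and uniform H\"older regularity already supplied by Theorem~\ref{s} and Lemma~\ref{3.12}. Writing $\phi=u-u^*$, $\psi=v-v^*$, $\xi=w-w^*$, I would take the entropy functional
\[
E(t)=\int_\Om\Big(u-u^*-u^*\ln\tfrac{u}{u^*}\Big)+\frac{a_2}{b_1}\int_\Om\Big(v-v^*-v^*\ln\tfrac{v}{v^*}\Big)+B\int_\Om(w-w^*)^2,
\]
where the weight $a_2/b_1$ is chosen to symmetrize the Lotka--Volterra cross term and $B>0$ is fixed below. Each summand is nonnegative and vanishes only at the equilibrium, so $E\ge0$. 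Subtracting the three algebraic identities characterizing $(u^*,v^*,w^*)$ (these convert the per-capita rates into $-a_1\phi-a_2\psi-a_3\int_\Om\phi-a_4\int_\Om\psi$ and its counterpart, and the chemical balance into $k\phi+l\psi-\lambda\xi$) and integrating by parts, I would differentiate $E$ along the flow.

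Next I would split $E'(t)$ into four groups. The diffusion terms give the good dissipation $-d_1u^*\int_\Om|\nabla u|^2/u^2-\tfrac{a_2}{b_1}d_2v^*\int_\Om|\nabla v|^2/v^2-Bd_3\int_\Om|\nabla w|^2$. The chemotaxis terms reduce to $\chi_1u^*\int_\Om u^{-1}\nabla u\cdot\nabla w+\tfrac{a_2}{b_1}\chi_2v^*\int_\Om v^{-1}\nabla v\cdot\nabla w$; by Young's inequality each is absorbed into the matching gradient dissipation at the cost of a multiple of $\int_\Om|\nabla w|^2$, and the choice $B=\tfrac{1}{4d_3}\big(\tfrac{\chi_1^2u^*}{d_1}+\tfrac{a_2\chi_2^2v^*}{b_1d_2}\big)=4\lambda a_2\varpi_3$ makes the total coefficient of $\int_\Om|\nabla w|^2$ vanish. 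This is precisely where $\varpi_3$ of \eqref{mzx} comes from. The linear coupling $Bk\int_\Om\xi\phi+Bl\int_\Om\xi\psi-B\lambda\int_\Om\xi^2$ is handled by completing the square in $\xi$, leaving the penalty $\tfrac{B}{4\lambda}\int_\Om(k\phi+l\psi)^2$. Finally the nonlocal reaction integrals are controlled by Cauchy--Schwarz, $\big(\int_\Om\phi\big)^2\le|\Om|\int_\Om\phi^2$, and Young's inequality; their net effect is to replace the local self-limitation coefficients $a_1,b_2$ by $\varpi_1,\varpi_2$ of \eqref{mzx}.

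Collecting these estimates leaves
\[
E'(t)\le-\int_\Om\Big[\varpi_1\phi^2+2a_2\phi\psi+\tfrac{a_2}{b_1}\varpi_2\psi^2-a_2\varpi_3(k\phi+l\psi)^2\Big],
\]
so the task reduces to the positive definiteness of the bracketed form. Regarding it as the quadratic form $R(\phi,\psi)=\varpi_1\phi^2+2a_2\phi\psi+\tfrac{a_2}{b_1}\varpi_2\psi^2$ minus the rank-one perturbation $a_2\varpi_3(k\phi+l\psi)^2$, the sharp condition $a_2\varpi_3\,(k,l)R^{-1}(k,l)^{\top}<1$ simplifies, after clearing $\det R$, to exactly \eqref{tjas}; moreover \eqref{432} makes the $\varpi_3$-coefficient $b_1l^2\varpi_1+a_2k^2\varpi_2-2a_2b_1kl$ positive, which together with \eqref{tjas} forces $\varpi_1,\varpi_2>0$ and hence $R\succ0$, justifying the criterion. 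Therefore $E'(t)\le-\delta\int_\Om(\phi^2+\psi^2)$ for some $\delta>0$. Integrating gives $\int_0^\infty\!\int_\Om(\phi^2+\psi^2)<\infty$; the uniform $C^{2+\sigma,1+\sigma/2}$ bounds of Lemma~\ref{3.12} make $t\mapsto\int_\Om(\phi^2+\psi^2)$ Lipschitz, so a Barbalat-type argument yields $\|u-u^*\|_{L^2}+\|v-v^*\|_{L^2}\to0$, which interpolation against the uniform $W^{1,\infty}$ bound upgrades to $L^\infty$-convergence of $u$ and $v$; then $w\to w^*$ follows from the third equation by parabolic regularity since its forcing $ku+lv\to\lambda w^*$.

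The genuinely hard part is the simultaneous bookkeeping: forcing the Young absorption of the chemotaxis cross terms, the completion of the square in $\xi$, and the Cauchy--Schwarz treatment of the nonlocal integrals to fuse into one quadratic form whose definiteness is captured \emph{exactly} by \eqref{432}--\eqref{tjas}. In particular the chemotactic coupling $\int_\Om u^{-1}\nabla u\cdot\nabla w$ is the nontrivial term, and it is the reason the diffusion rates, $\lambda$, and the sensitivities $\chi_i$ must enter the stability threshold through $\varpi_3$.
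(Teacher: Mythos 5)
Your proposal is correct and follows essentially the same route as the paper: the identical Lyapunov functional $\int_\Om\bigl(u-u^*-u^*\ln\frac{u}{u^*}\bigr)+\frac{a_2}{b_1}\int_\Om\bigl(v-v^*-v^*\ln\frac{v}{v^*}\bigr)+\frac{\delta}{2}\int_\Om(w-w^*)^2$ with the same weight $a_2/b_1$, conditions \eqref{432}--\eqref{tjas} emerging from the definiteness of the same quadratic form, and the same integrability-plus-uniform-continuity-plus-Gagliardo--Nirenberg endgame. The only substantive difference is bookkeeping: the paper takes $\delta_1$ strictly between $4\lambda a_2\varpi_3$ and $4\lambda a_2(\varpi_1\varpi_2-a_2b_1)/(b_1l^2\varpi_1+a_2k^2\varpi_2-2a_2b_1kl)$ and verifies positive definiteness of two $3\times3$ matrices by principal minors, so its dissipation functional $F_1$ already controls $\int_\Om(w-w^*)^2$; your endpoint choice of the weight and the completion of the square in $w-w^*$ discard that term, which is why you need the extra (standard, but not free) parabolic-regularity step to recover $w\to w^*$ from $ku+lv\to ku^*+lv^*$.
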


\begin{remark}\label{er2}
Our conditions in above theorem make sense with suitable lager $a_1$ and $b_2$. We recall the expressions of $u^*$ and $v^*$ which given by \eqref{0.77}, then
\begin{equation*}\begin{split}
u^* :&\equiv\frac{a_0(b_2+b_4|\Om|)-b_0(a_2+a_4|\Om|)}{(b_2+b_4|\Om|)(a_1+a_3|\Om|)-(a_2+a_4|\Om|)(b_1+b_3|\Om|)}\\
&=\frac{a_0-\frac{b_0(a_2+a_4|\Om|)}{(b_2+b_4|\Om|)}}{(a_1+a_3|\Om|)-\frac{(a_2+a_4|\Om|)(b_1+b_3|\Om|)}{(b_2+b_4|\Om|)}}
\end{split}\end{equation*}
and
\begin{equation*}\begin{split}
v^* :&\equiv\frac{a_0(b_1+b_3|\Om|)-b_0(a_1+a_3|\Om|)}{(a_2+a_4|\Om|)(b_1+b_3|\Om|)-(b_2+b_4|\Om|)(a_1+a_3|\Om|)}\\
&=\frac{\frac{a_0(b_1+b_3|\Om|)}{(a_1+a_3|\Om|)}-b_0}{\frac{(a_2+a_4|\Om|)(b_1+b_3|\Om|)}{(a_1+a_3|\Om|)}-(b_2+b_4|\Om|)}.
\end{split}\end{equation*}
The expressions above tell us that, for fixed $a_0, a_2, a_3, a_4, b_0, b_1, b_3, b_4$, constant $u^*$ and $v^*$ sufficiently close to 0 as $a_1, b_2$ sufficiently large, then for fixed $d_1, d_2, d_3, \chi_1, \chi_2, \lambda$, the constant $\varpi_3$ tends to 0 as $\varpi_1$ and $\varpi_2$ sufficiently big. This implies \eqref{432} and \eqref{tjas} immediately. Subsequently, \eqref{rjz} is obvious.
\end{remark}

When \eqref{zhuan} holds, problem \eqref{1} reduces to system \eqref{2} and \eqref{rjz} reduces to $\bar{a}_1, \bar{a}_2\in (0, 1)$, then the assumptions in Theorem \ref{33} can be rewritten as $\varpi_1=\mu_1, \varpi_2=\mu_2$ and
\begin{equation*}
\varpi_3=\frac{d_1\mu_1\bar{a}_1\chi_2^2v^*+d_2\mu_2\bar{a}_2\chi_1^2u^*}{16d_1d_2d_3\mu_1\mu_2\bar{a}_1\bar{a}_2\lambda},
\end{equation*}
as well as
\begin{equation}\label{432sa}
\bar{a}_1k^2+\bar{a}_2l^2>2\bar{a}_1\bar{a}_2kl,
\end{equation}
and
\begin{equation}\label{tjassa}
1>\bar{a}_1\bar{a}_2+\left(\bar{a}_1k^2+\bar{a}_2l^2-2\bar{a}_1\bar{a}_2kl\right)
\frac{d_1\mu_1\bar{a}_1\chi_2^2v^*+d_2\mu_2\bar{a}_2\chi_1^2u^*}{16d_1d_2d_3\mu_1\mu_2\bar{a}_1\bar{a}_2\lambda}.
\end{equation}
Since $\bar{a}_1, \bar{a}_2\in(0, 1)$, we know \eqref{432sa} holds naturally and \eqref{tjassa} makes sense if $\mu_1, \mu_2$ suitable lager or $\chi_1, \chi_2$ appropriate small. Moreover, \eqref{tjassa} is equivalent to the result obtained in \cite[Theorem 1.2]{bai2016equilibration}.

Next, we study the globally asymptotically behaviors of $(u^\star, v^\star, w^\star)$ with strongly asymmetric exclusion case.

\begin{theorem}\label{1q}
Let $a_i, b_i>0, i=3, 4$, $(u^\star, v^\star, w^\star)$ be given by \eqref{0.747} and \eqref{qjz} hold. Assume $\varpi_1$ and $\varpi_2$ be the constants given by \eqref{mzx} with $\varpi_1>0$, and
\begin{equation*}
\bar{\varpi}_3:=\frac{\chi_2^2v^\star}{16d_2d_3b_1\lambda}.
\end{equation*}
If the parameters satisfy \eqref{432} and
\begin{equation}\label{cdsa}
\varpi_1\varpi_2>a_2b_1+\left(b_1l^2\varpi_1+a_2k^2\varpi_2-2a_2b_1kl\right)\bar{\varpi}_3.
\end{equation}
Then $(u^\star, v^\star, w^\star)$ is globally asymptotically stable with respect to problem \eqref{1} in the sense that
\begin{equation*}
\begin{split}
\lim_{t\rightarrow +\infty}\left(\|u(\cdot,t)-u^\star\|_{L^{\infty}(\Omega)}+\|v(\cdot,t)-v^\star\|_{L^{\infty}(\Omega)}+\|w(\cdot,t)-w^\star\|_{L^{\infty}(\Omega)}\right)=0.
\end{split}
\end{equation*}
\end{theorem}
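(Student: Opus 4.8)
The plan is to exhibit a Lyapunov functional and show it is strictly dissipative under \eqref{432} and \eqref{cdsa}. Since the target state has $u^\star=0$, the first species cannot be measured by a relative entropy, so I would take a linear contribution for $u$ and keep the usual entropy for the surviving species $v$, together with a quadratic penalty for $w$; concretely
\begin{equation*}
E(t):=\int_\Om u+A\int_\Om\Big(v-v^\star-v^\star\ln\frac{v}{v^\star}\Big)+\frac{B}{2}\int_\Om(w-w^\star)^2,
\end{equation*}
with weights $A,B>0$ to be fixed later (the natural choice will be $A=a_2/b_1$, which symmetrizes the reaction cross terms). Because $u\ge0$, the map $s\mapsto s-v^\star-v^\star\ln(s/v^\star)$ is nonnegative and convex, and the last term is a square, so $E(t)\ge0$ with equality only at $(u,v,w)=(0,v^\star,w^\star)$.

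Next I would differentiate $E$ along the solution, using the three equations of \eqref{1}, integrating by parts and invoking the homogeneous Neumann conditions. Three features are decisive. First, the chemotaxis term of the $u$--equation integrates to zero, $-\chi_1\int_\Om\nabla\!\cdot(u\nabla w)=0$, so $\chi_1$ never enters the estimate; this is precisely why $\bar\varpi_3$ carries no $\chi_1$. Second, testing the $v$--equation against $1-v^\star/v$ produces the good diffusion dissipation $-Ad_2v^\star\int_\Om|\nabla v|^2/v^2$ together with the single bad gradient term $A\chi_2v^\star\int_\Om(\nabla v\cdot\nabla w)/v$, while the $w$--equation tested against $w-w^\star$ yields $-Bd_3\int_\Om|\nabla w|^2-B\lambda\int_\Om(w-w^\star)^2$ plus the couplings $Bk\int_\Om(w-w^\star)u+Bl\int_\Om(w-w^\star)(v-v^\star)$, after using the equilibrium identity $\lambda w^\star=lv^\star$. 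Third, expanding the reaction terms about $(0,v^\star)$ and using $b_0=(b_2+b_4|\Om|)v^\star$, the linear-in-$u$ remainder equals $-c_0\int_\Om u$ with $c_0:=(a_2+a_4|\Om|)v^\star-a_0\ge0$, the sign being guaranteed exactly by the strongly asymmetric hypothesis $\frac{b_2+b_4|\Om|}{a_2+a_4|\Om|}\le\frac{b_0}{a_0}$ in \eqref{qjz}; this term only helps.

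The heart of the argument is then to convert everything into a definite quadratic form. I would absorb the bad gradient term by Young's inequality, $A\chi_2v^\star\int(\nabla v\cdot\nabla w)/v\le Ad_2v^\star\int|\nabla v|^2/v^2+\frac{A\chi_2^2v^\star}{4d_2}\int|\nabla w|^2$, and choose $B$ so that the resulting $|\nabla w|^2$ coefficient is controlled by $-Bd_3\int|\nabla w|^2$; the minimal admissible $B$ is what produces the constant $\bar\varpi_3=\chi_2^2v^\star/(16d_2d_3b_1\lambda)$. The nonlocal integral terms $(\int u)^2$, $(\int u)(\int(v-v^\star))$, $(\int(v-v^\star))^2$ would be handled by Cauchy--Schwarz, $(\int_\Om\phi)^2\le|\Om|\int_\Om\phi^2$, and Young's inequality, transferring them onto $\int_\Om u^2$ and $\int_\Om(v-v^\star)^2$; this is exactly what degrades $a_1,b_2$ into the effective competition strengths $\varpi_1=a_1-(a_4+b_3)|\Om|$ and $\varpi_2=b_2-(a_4+b_3)|\Om|$ of \eqref{mzx}, while the dissipative nonlocal squares $-a_3(\int u)^2$, $-Ab_4(\int(v-v^\star))^2$ are simply discarded. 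After splitting the $w$--couplings against the retained $-B\lambda\int(w-w^\star)^2$, one is left with a quadratic form in $\big(\int_\Om u^2,\int_\Om u(v-v^\star),\int_\Om(v-v^\star)^2\big)$ whose diagonal entries are $\varpi_1-\frac{Bk^2}{\lambda}$ and $A\varpi_2-\frac{Bl^2}{\lambda}$ and whose symmetric off-diagonal is $a_2$ (using $A=a_2/b_1$). Nonemptiness of the admissible range of $B$, equivalently positivity of the coefficient $b_1l^2\varpi_1+a_2k^2\varpi_2-2a_2b_1kl$ multiplying $\bar\varpi_3$, is what \eqref{432} provides, and positivity of the determinant at the extremal $B$ is exactly \eqref{cdsa}; hence $E'(t)\le-\delta\int_\Om\big(u^2+(v-v^\star)^2+(w-w^\star)^2\big)$ for some $\delta>0$.

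Finally I would pass from dissipation to convergence. Integrating the last inequality gives $\int_0^\infty\!\int_\Om(u^2+(v-v^\star)^2+(w-w^\star)^2)<\infty$; combined with the uniform $C^{2+\sigma,1+\sigma/2}$ bounds furnished by Lemma \ref{3.12}, the integrand is uniformly continuous in $t$, so a Barbalat-type argument forces $\|u(\cdot,t)\|_{L^2}+\|v(\cdot,t)-v^\star\|_{L^2}+\|w(\cdot,t)-w^\star\|_{L^2}\to0$. The Gagliardo--Nirenberg inequality of Lemma \ref{3.t10}, used against the uniform higher-order bounds, upgrades this $L^2$ decay to the asserted $L^\infty$ decay. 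Since $\bar\varpi_3$ is nothing but $\varpi_3$ of Theorem \ref{33} evaluated at $u^*=0,\,v^*=v^\star$, almost all of this calculation runs parallel to the weak-competition case. The hard part will be the precise Young and weight bookkeeping of the nonlocal and chemotactic cross terms so that the thresholds come out exactly as \eqref{432} and \eqref{cdsa}: here, unlike in Theorem \ref{33}, there is no $u$--entropy and hence no $\int|\nabla u|^2/u^2$ dissipation, so all control of the first species must be extracted from the reaction quadratic form together with the favorable sign of $-c_0\int_\Om u$, which is what makes the largeness of $b_2$ (through $\varpi_2$) the binding requirement.
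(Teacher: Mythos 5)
Your proposal is correct and follows essentially the same route as the paper: the same functional $E_2=\int_\Om u+\frac{a_2}{b_1}\int_\Om\bigl(v-v^\star-v^\star\ln\frac{v}{v^\star}\bigr)+\frac{\delta_2}{2}\int_\Om(w-w^\star)^2$, the same use of $b_0=(b_2+b_4|\Om|)v^\star$, $\lambda w^\star=lv^\star$ and of \eqref{qjz} to discard the sign-favorable linear term in $u$, the same Young/H\"older treatment of the nonlocal terms producing $\varpi_1,\varpi_2$, and the same window for the weight ($\delta_2$ in the paper, your $B$) whose nonemptiness is exactly \eqref{432} and \eqref{cdsa}, followed by Barbalat and Gagliardo--Nirenberg. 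The only cosmetic difference is that you eliminate the gradient and $w$-couplings by Young/Schur-complement estimates where the paper checks positive definiteness of the explicit $3\times3$ and $2\times2$ matrices $\bar{\mathcal P}$, $\bar{\mathcal S}$ via principal minors; the resulting thresholds coincide.
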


\begin{remark}\label{r1}
Compared with the weak competition case in Theorem \ref{33}, the strongly asymmetric exclusion case only need appropriate big $b_2$ and fixed $a_1$ which satisfies $a_1>a_4|\Om|+b_3|\Om|$ to stabilize the constant steady-state solution $(u^\star, v^\star, w^\star)$.
\end{remark}

\subsection{Weak competition}

In this subsection we will work towards the proof of Theorems \ref{33}, the arguments are based on an energy-type inequality, see \cite[Lemma 3.2]{bai2016equilibration}.

\begin{lemma}{\label{2.12}}
Suppose the parameters satisfy \eqref{rjz}, \eqref{mzx}, \eqref{432} and \eqref{tjas}. Let $(u, v, w)$ be the global and bounded classical solution of problem \eqref{1} with nonnegative initial value $u_0, v_0\not\equiv0$. Then for all $t>0$ there exist $\delta_1>0$ and $\epsilon_1>0$ such that
\begin{equation}\label{day}
E_1(t)\geq0
\end{equation}
and
\begin{equation}\label{2.15}
E_1'(t)\leq-\epsilon_1 F_1(t),
\end{equation}
where
\begin{equation*}\begin{split}
E_1(t)&:=\int_{\Omega}\left[u(\cdot, t)-u^*-u^*\ln\frac{u(\cdot, t)}{u^*}\right]+\int_{\Omega}\left[v(\cdot, t)-v^*-v^*\ln\frac{v(\cdot, t)}{v^*}\right]\\
&\quad\ +\frac{\delta}{2}\int_{\Omega}(w(\cdot, t)-w^*)^{2},\\
F_1(t)&:=\int_{\Omega}(u(\cdot, t)-u^*)^{2}+\int_{\Omega}(v(\cdot, t)-v^*)^{2}+\int_{\Omega}(w(\cdot, t)-w^*)^{2}.
\end{split}\end{equation*}
\end{lemma}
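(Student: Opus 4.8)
The plan is to use $E_1$ as a Lyapunov functional: establish \eqref{day} by a convexity argument, then differentiate along \eqref{1} and absorb all the bad terms until only a negative-definite quadratic form survives, which forces \eqref{2.15}. First I would record that, by the global boundedness from Theorem~\ref{s} together with the strong maximum principle (using $u_0,v_0\not\equiv0$), one has $u,v>0$ on $\overline\Omega\times(0,\infty)$, so the logarithms in $E_1$ make sense. Inequality \eqref{day} is then immediate from the elementary fact that $\phi_s(r):=r-s-s\ln\frac{r}{s}\ge0$ for all $r,s>0$ (with equality iff $r=s$), applied with $s\in\{u^*,v^*\}$, plus the obvious nonnegativity of $\frac{\delta}{2}(w-w^*)^2$.

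Next I would compute $E_1'(t)$. Using $\frac{d}{dt}\big[u-u^*-u^*\ln\frac{u}{u^*}\big]=\frac{u-u^*}{u}u_t$, integrating by parts and invoking the no-flux conditions, the diffusion and chemotaxis parts of the first equation yield $-d_1u^*\int_\Omega\frac{|\nabla u|^2}{u^2}+\chi_1u^*\int_\Omega\frac{\nabla u\cdot\nabla w}{u}$, with the analogous $v$-terms, while the $w$-contribution gives $-\delta d_3\int_\Omega|\nabla w|^2-\delta\lambda\int_\Omega(w-w^*)^2+\delta\int_\Omega(w-w^*)\big[k(u-u^*)+l(v-v^*)\big]$ after using the steady relation $\lambda w^*=ku^*+lv^*$. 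For the reaction parts I would subtract the equilibrium identities $a_0=a_1u^*+a_2v^*+(a_3u^*+a_4v^*)|\Omega|$ and its $b$-analogue, rewriting each source as a quadratic form in $(u-u^*),(v-v^*)$ and their spatial means $\int_\Omega(u-u^*),\int_\Omega(v-v^*)$.

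Then I would dispose of the gradients and the nonlocal terms. The two chemotaxis cross terms are absorbed into the favourable diffusion terms by Young's inequality, e.g. $\chi_1u^*\frac{\nabla u\cdot\nabla w}{u}\le d_1u^*\frac{|\nabla u|^2}{u^2}+\frac{\chi_1^2u^*}{4d_1}|\nabla w|^2$; choosing $\delta=\delta_1$ just large enough that $\delta d_3$ dominates the resulting coefficient of $\int_\Omega|\nabla w|^2$ lets me discard every gradient contribution. This is exactly where $\varpi_3$ enters: it packages this $w$-coupling constant divided by the confinement rate $\lambda$. For the nonlocal quadratics, the favourable squares $-a_3\left(\int_\Omega(u-u^*)\right)^2$ and $-b_4\left(\int_\Omega(v-v^*)\right)^2$ (here $a_3,b_4>0$) are simply dropped, and the mixed term carrying $a_4+b_3$ is estimated via $\left(\int_\Omega(u-u^*)\right)^2\le|\Omega|\int_\Omega(u-u^*)^2$ and Young's inequality; this is precisely what turns the local intraspecific coefficients $a_1,b_2$ into $\varpi_1,\varpi_2$ of \eqref{mzx}.

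At this stage $E_1'(t)$ is bounded above by $-\mathcal Q$, where $\mathcal Q$ is a quadratic form in $\big(\|u-u^*\|_{L^2},\|v-v^*\|_{L^2},\|w-w^*\|_{L^2}\big)$ whose diagonal entries are controlled by $\varpi_1,\varpi_2,\delta\lambda$, whose $u$--$v$ coupling is governed by $a_2,b_1$, and whose $w$-couplings are governed by $k,l$. It then remains to show $\mathcal Q$ is positive definite, which I would verify by a Schur complement: eliminating the $w$-variable leaves a $2\times2$ form on $(u,v)$ whose positive definiteness amounts to $\varpi_1>0$ together with $\varpi_1\varpi_2>a_2b_1+(b_1l^2\varpi_1+a_2k^2\varpi_2-2a_2b_1kl)\varpi_3$, i.e. \eqref{tjas}, whereas \eqref{432} guarantees that the Schur correction $b_1l^2\varpi_1+a_2k^2\varpi_2-2a_2b_1kl$ is positive, so that \eqref{tjas} is a genuine, solvable constraint. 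Positive definiteness of $\mathcal Q$ gives $\mathcal Q\ge\epsilon_1 F_1(t)$ and hence \eqref{2.15}. I expect the main obstacle to be exactly this final bookkeeping: one must symmetrize the competitive cross term (through the natural weights attached to $a_2$ and $b_1$, which is why the product $a_2b_1$ rather than $(a_2+b_1)^2/4$ appears) and tune the single free parameter $\delta$ so that the gradient terms vanish without over-inflating the $w$-coupling, so that the surviving form is governed precisely by \eqref{432}--\eqref{tjas} rather than by a cruder sufficient condition.
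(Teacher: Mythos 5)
Your proposal is correct and follows essentially the same route as the paper: the same weighted entropy functional (with the weight $a_2/b_1$ on the $v$-part, which is exactly why the product $a_2b_1$ rather than $(a_2+b_1)^2/4$ appears), the same equilibrium identities, the same Cauchy--Schwarz/Young treatment of the nonlocal terms producing $\varpi_1,\varpi_2$, and the same two-sided tuning of $\delta_1$ so that a positive definite quadratic form in $(u-u^*,v-v^*,w-w^*)$ survives, with \eqref{432} guaranteeing the admissible window for $\delta_1$ is nonempty under \eqref{tjas}. The only cosmetic difference is that you absorb the chemotactic cross terms by Young's inequality and discard the gradient terms outright, whereas the paper retains them as a second quadratic form $Y\cdot(\mathcal{S}\cdot Y^T)$ and checks its principal minors; both reduce to the identical lower bound \eqref{4tj} on $\delta_1$.
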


\begin{proof}
The inequality \eqref{day} can be obtained with the same way in \cite[Lemma 3.2]{bai2016equilibration}. We only prove \eqref{2.15}. According to \eqref{432} and \eqref{tjas} we have
\begin{equation*}
\frac{\varpi_1\varpi_2-a_2b_1}{b_1l^2\varpi_1+a_2k^2\varpi_2-2a_2b_1kl}>\varpi_3,
\end{equation*}
which combines the definitions of $\varpi_1, \varpi_2$ and $\varpi_3$ suggest that
\begin{equation*}
\frac{4\lambda a_2\left[(a_1-a_4|\Om|-b_3|\Om|)(b_2-b_3|\Om|-a_4|\Om|)-b_1a_2\right]}{(a_1-a_4|\Om|-b_3|\Om|)b_1l^2+(b_2-b_3|\Om|-a_4|\Om|)a_2k^2-2a_2b_1kl}
>\frac{d_1a_2\chi_2^2v^*+d_2b_1\chi_1^2u^*}{4d_1d_2d_3b_1},
\end{equation*}
so there exists a constant $\delta_1$ such that
\begin{equation}\label{3tj}
\delta_1<\frac{4\lambda a_2\left[(a_1-a_4|\Om|-b_3|\Om|)(b_2-b_3|\Om|-a_4|\Om|)-b_1a_2\right]}{(a_1-a_4|\Om|-b_3|\Om|)b_1l^2+(b_2-b_3|\Om|-a_4|\Om|)a_2k^2-2a_2b_1kl}
\end{equation}
and
\begin{equation}\label{4tj}
\delta_1>\frac{d_1a_2\chi_2^2v^*+d_2b_1\chi_1^2u^*}{4d_1d_2d_3b_1}.
\end{equation}
For such fixed $\delta_1$ the functional $E_1(t)$ can be rewritten as
\begin{equation*}
E_1(t)=A_1(t)+\frac{a_2}{b_1}B_1(t)+C_1(t)
\end{equation*}
with
\begin{equation*}\begin{split}
& A_1(t):=\int_{\Omega}\left[u(\cdot, t)-u^*-u^*\ln\frac{u(\cdot, t)}{u^*}\right],\\
& B_1(t):=\int_{\Omega}\left[v(\cdot, t)-v^*-v^*\ln\frac{v(\cdot, t)}{v^*}\right],\\
& C_1(t):=\frac{\delta_1}{2}\int_\Om(w-w^*)^2.
\end{split}\end{equation*}
We test the first equation of \eqref{1} by $1-\frac{u^*}{u}$ and use the boundary conditions to obtain
\begin{equation}{\label{2.4z}}
\begin{split}
\frac{d}{dt}A_1(t)&=-d_1u^*\int_{\Omega}\left|\frac{\nabla u}{u}\right|^2+\chi_1u^*\int_{\Omega}\frac{\nabla u}{u}\cdot\nabla w\\
&\quad+\int_\Om(u-u^*)\left(a_0-a_1u-a_2v-a_3\int_\Om u-a_4\int_\Om v\right).
\end{split}
\end{equation}
For the last term of equality \eqref{2.4z} we use the equality
\begin{equation*}
a_0=a_1u^*+a_2v^*+a_3|\Om|u^*+a_4|\Om|v^*
\end{equation*}
and the Young and the H\"{o}lder inequalities to have
\begin{equation*}\begin{split}
&\int_\Om(u-u^*)\left(a_0-a_1u-a_2v-a_3\int_\Om u-a_4\int_\Om v\right)\\
&=\int_\Om(u-u^*)\left(a_1(u^*-u)+a_2(v^*-v)+a_3\int_\Om (u^*-u)+a_4\int_\Om (v^*-v)\right)\\
&=-a_1\int_\Om(u-u^*)^2-a_2\int_\Om(u-u^*)(v-v^*)-a_3\left(\int_\Om(u-u^*)\right)^2-a_4\int_\Om(u-u^*)\int_\Om(v-v^*)\\
&\leq-\left(a_1-a_4|\Om|\right)\int_\Om(u-u^*)^2-a_2\int_\Om(u-u^*)(v-v^*)+a_4|\Om|\int_\Om(v-v^*)^2,
\end{split}\end{equation*}
which together with \eqref{2.4z} implies
\begin{equation}{\label{2.424}}
\begin{split}
\frac{d}{dt}A_1(t)&\leq-d_1u^*\int_{\Omega}\left|\frac{\nabla u}{u}\right|^2+\chi_1u^*\int_{\Omega}\frac{\nabla u}{u}\cdot\nabla w\\
&\quad-\left(a_1-a_4|\Om|\right)\int_\Om(u-u^*)^2-a_2\int_\Om(u-u^*)(v-v^*)+a_4|\Om|\int_\Om(v-v^*)^2.
\end{split}
\end{equation}
Testing the second equation of \eqref{1} by $1-\frac{v^*}{v}$ and using the equality
\begin{equation*}
b_0=b_1u^*+b_2v^*+b_3|\Om|u^*+b_4|\Om|v^*,
\end{equation*}
we can get following inequality by similarly argument,
\begin{equation}{\label{2.5}}
\begin{split}
\frac{d}{dt}B_1(t)&\leq-d_2v^*\int_{\Omega}\left|\frac{\nabla v}{v}\right|^2+\chi_2v^*\int_{\Omega}\frac{\nabla v}{v}\cdot\nabla w\\
&\quad-\left(b_2-b_3|\Om|\right)\int_\Om(v-v^*)^2-b_1\int_\Om(u-u^*)(v-v^*)+b_3|\Om|\int_\Om(u-u^*)^2.
\end{split}
\end{equation}
Multiplying the third equation of \eqref{1} by $w-w^*$ and by the fact $\lambda w^*=ku^*+lv^*$ we further see
\begin{equation}{\label{2.10}}
\begin{split}
\frac{d}{dt}C_1(t)&=-\delta_1 d_3\int_{\Omega}|\nabla w|^{2}-\lambda\delta_1\int_{\Omega}(w-w^*)^2+k\delta_1\int_{\Omega}(w-w^*)(u-u^*)\\
&\quad+l\delta_1\int_{\Omega}(w-w^*)(v-v^*).
\end{split}
\end{equation}
By \eqref{2.424}, \eqref{2.5} and \eqref{2.10}, for all $t>0$ we can see
\begin{equation}\begin{split}{\label{2.11}}
\frac{d}{dt}E_1(t)&\leq-\left(a_1-a_4|\Om|-b_3|\Om|\right)\int_\Om (u-u^*)^2-2a_2\int_\Om (u-u^*)(v-v^*)\\
&\quad+k\delta_1\int_{\Omega}(u-u^*)(w-w^*)-\frac{a_2(b_2-b_3|\Om|-a_4|\Om|)}{b_1}\int_\Om(v-v^*)^2\\
&\quad+l\delta_1\int_{\Omega}(v-v^*)(w-w^*)-\lambda\delta_1\int_{\Omega}(w-w^*)^2-d_1u^*\int_{\Omega}\left|\frac{\nabla u}{u}\right|^2\\
&\quad+\chi_1u^*\int_{\Omega}\frac{\nabla u}{u}\cdot\nabla w-\frac{a_2d_2v^*}{b_1}\int_{\Omega}\left|\frac{\nabla v}{v}\right|^2+\frac{a_2\chi_2v^*}{b_1}\int_{\Omega}\frac{\nabla v}{v}\cdot\nabla w\\
&\quad-\delta_1 d_3\int_{\Omega}|\nabla w|^{2}\\
&=-\int_\Om X\cdot(\mathcal {P}\cdot X^T)-\int_{\Omega}Y\cdot(\mathcal {S}\cdot Y^T),
\end{split}\end{equation}
where $X, Y$ are two vector functions and $X^T, Y^T$ are their transposes, for all $(x,t)\in\Om\times(0,\infty], X, Y$ are defined by
\begin{equation*}\begin{split}
X(x,t)&:=(u-u^*, v-v^*, w-w^*),\\
Y(x,t)&:=\left(\frac{|\nabla u|}{u}, \frac{|\nabla v|}{v}, |\nabla w|\right),
\end{split}\end{equation*}
the constant symmetric matrices $\mathcal {P}$ and $\mathcal {S}$ are defined as follows:
\begin{eqnarray*}
\mathcal {P}:=\left(
\begin{array}{ccc}
a_1-a_4|\Om|-b_3|\Om|\ & a_2\ &-\frac{k\delta_1}{2}\\
a_2\ &\frac{a_2(b_2-b_3|\Om|-a_4|\Om|)}{b_1}\ &-\frac{l\delta_1}{2}\\
-\frac{k\delta_1}{2}\ & -\frac{l\delta_1}{2}\ &\lambda\delta_1
\end{array}
\right),
\end{eqnarray*}
and
\begin{eqnarray*}
\mathcal {S}:=\left(
\begin{array}{ccc}
d_1u^*\ & 0\ &-\frac{\chi_1u^*}{2}\\
0\ &\frac{a_2d_2v^*}{b_1}\ &-\frac{a_2\chi_2v^*}{2b_1}\\
-\frac{\chi_1u^*}{2}\ & -\frac{a_2\chi_2v^*}{2b_1}\ &d_3\delta_1
\end{array}
\right).
\end{eqnarray*}

Next, we claim that $\mathcal {P}$ and $\mathcal {S}$ are two positive definite matrices. In this end, we shall compute the three principal minors of $\mathcal {P}$ and $\mathcal {S}$ respectively, and show that all of them are positive. It is easy to see that $M_1:=a_1-a_4|\Om|-b_3|\Om|>0$ and
\begin{equation*}
M_2:=(a_1-a_4|\Om|-b_3|\Om|)(b_2-b_3|\Om|-a_4|\Om|)\frac{a_2}{b_1}-a_2^2>0
\end{equation*}
due to \eqref{432} and \eqref{tjas}. Moreover,
\begin{equation*}\begin{split}
M_3:&=|\mathcal {P}|=\frac{\delta_1}{4b_1}\bigg(4\lambda\left[(a_1-a_4|\Om|-b_3|\Om|)(b_2-b_3|\Om|-a_4|\Om|)a_2-b_1a_2^2\right]\\
&\quad\quad\quad\quad\quad\quad -\delta_1\left[(a_1-a_4|\Om|-b_3|\Om|)b_1l^2+(b_2-b_3|\Om|-a_4|\Om|)a_2k^2-2a_2b_1kl\right]\bigg)>0
\end{split}\end{equation*}
in view of \eqref{3tj}. Similarly, we can prove that all the three principal minors of $\mathcal {S}$ are positive with \eqref{4tj}. Hence, the claim is true. Furthermore, we can infer from the definiteness properties that there is a suitable positive constant $\epsilon_1$ satisfies
\begin{equation*}
X\cdot(\mathcal {P}\cdot X^T)\geq\epsilon_1|X|^2 \ \  \mbox{and}\ \ Y\cdot(\mathcal {S}\cdot Y^T)\geq\epsilon_1|Y|^2,
\end{equation*}
this combines \eqref{2.11} lead to (\ref{2.15}).
\end{proof}

Beside the relation of $E_1(t)$ and $F_1(t)$, we further need following conclusion which can be found in \cite[Lemma 3.1]{bai2016equilibration}.

\begin{lemma}\label{scs}
Suppose that $f: (1, \infty)\rightarrow [0, \infty)$ is a uniformly continuous nonnegative function such that $\int_1^\infty f(t)dt<\infty$. Then $f(t)\rightarrow 0$ as $t\rightarrow\infty$.
\end{lemma}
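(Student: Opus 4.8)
The plan is to argue by contradiction and to convert uniform continuity into a quantitative lower bound: if $f$ stays bounded away from $0$ along some sequence, then uniform continuity forces $f$ to remain large on a whole interval of fixed length around each such point, and summing these contributions over a divergent family of disjoint intervals contradicts the integrability of $f$. Accordingly, I would suppose that $f(t)$ does not tend to $0$ as $t\to\infty$, so that there exist a constant $\varepsilon>0$ and a strictly increasing sequence $t_n\to\infty$ with
\[
f(t_n)\geq\varepsilon\quad\mbox{for every}\ n.
\]

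Next I would invoke the uniform continuity of $f$ on $(1,\infty)$: there is a single $\delta>0$, independent of the base point, such that $|f(s)-f(t)|<\frac{\varepsilon}{2}$ whenever $|s-t|\leq\delta$. Hence for each $n$ and every $s\in[t_n,t_n+\delta]$ one has $f(s)\geq f(t_n)-\frac{\varepsilon}{2}\geq\frac{\varepsilon}{2}$, so that
\[
\int_{t_n}^{t_n+\delta}f(s)\,ds\geq\frac{\varepsilon\delta}{2}.
\]
Because $t_n\to\infty$, I can pass to a subsequence (which I relabel $t_n$) for which the intervals $[t_n,t_n+\delta]$ are pairwise disjoint; it suffices to select successive indices so that $t_{n+1}>t_n+\delta$. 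Using the nonnegativity of $f$ together with this disjointness, I would estimate
\[
\int_1^\infty f(s)\,ds\geq\sum_{n}\int_{t_n}^{t_n+\delta}f(s)\,ds\geq\sum_{n}\frac{\varepsilon\delta}{2}=+\infty,
\]
which contradicts the hypothesis $\int_1^\infty f(t)\,dt<\infty$. Therefore $f(t)\to0$ as $t\to\infty$.

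The argument is entirely elementary, and I do not expect any serious obstacle. The one point deserving care is that the same $\delta$ must work at every $t_n$ simultaneously, which is exactly what \emph{uniform} (as opposed to mere) continuity supplies; without it a function could develop integrable spikes of shrinking width that violate the conclusion. The only other bookkeeping step is the extraction of the disjoint subfamily of intervals, which is immediate since $t_n\to\infty$.
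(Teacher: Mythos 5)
Your argument is correct and is precisely the standard Barbalat-type proof: the paper itself does not reprove this lemma but simply cites \cite[Lemma 3.1]{bai2016equilibration}, whose proof is exactly your contradiction argument (uniform continuity yields a fixed $\delta$ so that $f\geq\varepsilon/2$ on disjoint intervals of length $\delta$, forcing the integral to diverge). No gaps; the one point you flag --- that $\delta$ must be independent of the base point --- is indeed the only place uniform continuity is used.
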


Now, we prove our first stabilization result.

\begin{proof}[Proof of Theorem 1.2.]
Making use of \eqref{day} and \eqref{2.15} we obtain
\begin{equation*}
\int_1^\infty F_1(t)dt\leq\frac{E_1(1)}{\epsilon}<\infty.
\end{equation*}
From Lemma \ref{3.12} we know that the bounded solution elements $u, v$ and $w$ are H\"{o}lder continuous in $\bar\Om\times[t; t+1]$ uniformly with respect to $t>1$, from which we conclude that $F_1(t)$ is
uniformly continuous in $(1, \infty)$. Therefore, Lemma \ref{scs} implies
\begin{equation}\label{45}
\int_\Om(u-u_*)^2+\int_\Om(v-v_*)^2+\int_\Om(w-w_*)^2\rightarrow 0,
\end{equation}
as $t\rightarrow\infty$.
By the Gagliardo-Nirenberg inequality we know there exists $C_1>0$ such that
\begin{equation*}
\|u-u_*\|_{L^\infty(\Om)}\leq C_1\|u-u_*\|_{W^{1, \infty}(\Om)}^{\frac{n}{n+2}}\|u-u_*\|_{L^{2}(\Om)}^{\frac{2}{n+2}}
\end{equation*}
Herein, it follows from Lemma 3.2 that $(u(\cdot, t))_{t>1}$ is bounded in $W^{1, \infty}(\Om)$, thus we conclude from \eqref{45} that $u(\cdot, t)\rightarrow u_*$ in $L^\infty(\Om)$ as $t\rightarrow\infty$. By similar arguments for $v$ and $w$ we obtain our conclusions.
\end{proof}

\subsection{Strongly asymmetric competition}

In the sequel, we consider the solution of problem \eqref{1} with exclusion competition case. Similar to the method used in \cite[Lemma 3.4]{bai2016equilibration}, we introduce following two functionals with some positive constant $\delta_2$,
\begin{equation*}\begin{split}
E_2(t)&:=\int_{\Omega}u(\cdot, t)+\int_{\Omega}\left[v(\cdot, t)-v^\star-v^\star\ln\frac{v(\cdot, t)}{v^\star}\right]+\frac{\delta_2}{2}\int_{\Omega}(w(\cdot, t)-w^\star)^{2},\\
F_2(t)&:=\int_{\Omega}u^{2}(\cdot, t)+\int_{\Omega}(v(\cdot, t)-v^\star)^{2}+\int_{\Omega}(w(\cdot, t)-w^\star)^{2}.
\end{split}\end{equation*}

\begin{lemma}{\label{2.122}}
Suppose the parameters satisfy \eqref{432} and \eqref{cdsa}. Let $(u, v, w)$ be the global and bounded classical solution of problem \eqref{1} with $u\not=0$ and $v\not=0$. Then for all $t>0$ there exist $\delta_2>0$ and $\epsilon_2>0$ such that $E_2(t)\geq0$ and
\begin{equation}\label{2.151}
E_2'(t)\leq-\epsilon_2 F_2(t).
\end{equation}
\end{lemma}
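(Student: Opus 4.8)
The plan is to mirror the energy method of Lemma \ref{2.12}, the essential new feature being that the first component $u^\star=0$ of the equilibrium in \eqref{0.747} forces me to replace the logarithmic relative entropy of $u$ by the bare mass $\int_\Om u$. Accordingly I would test the first equation of \eqref{1} merely against the constant $1$: the diffusion and chemotactic terms drop after integration by parts, leaving $\frac{d}{dt}\int_\Om u=\int_\Om u\left(a_0-a_1u-a_2v-a_3\int_\Om u-a_4\int_\Om v\right)$. Here the strongly asymmetric hypothesis \eqref{qjz} enters decisively: together with $v^\star=\frac{b_0}{b_2+b_4|\Om|}$ it gives $a_0\le(a_2+a_4|\Om|)v^\star$, which lets me rewrite $a_0-a_2v-a_4\int_\Om v\le a_2(v^\star-v)+a_4\int_\Om(v^\star-v)$. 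Discarding the favourable square $-a_3(\int_\Om u)^2\le0$ (recall $a_3>0$ here) and absorbing the nonlocal product $-a_4\int_\Om u\int_\Om(v-v^\star)$ through Cauchy--Schwarz and Young, I obtain an inequality of the form $\frac{d}{dt}\int_\Om u\le-(a_1-a_4|\Om|)\int_\Om u^2-a_2\int_\Om u(v-v^\star)+a_4|\Om|\int_\Om(v-v^\star)^2$, which is precisely the analogue of \eqref{2.424} with $u^\star=0$ and no gradient contributions.

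Next I would test the second equation against $1-\frac{v^\star}{v}$ and the third against $w-w^\star$, exactly as in \eqref{2.5} and \eqref{2.10}; since $u^\star=0$ the identities $b_0=(b_2+b_4|\Om|)v^\star$ and $\lambda w^\star=lv^\star$ replace their weak-competition counterparts, and the outcome is the same pair of inequalities with every occurrence of $u-u^\star$ read as $u$. Adding the three contributions after weighting the $v$-entropy by $\frac{a_2}{b_1}$ (so that the two cross terms $-a_2\int_\Om u(v-v^\star)$ and $-b_1\int_\Om u(v-v^\star)$ combine symmetrically into $-2a_2\int_\Om u(v-v^\star)$) yields $E_2'(t)\le-\int_\Om X\cdot(\mathcal{P}X^T)-\int_\Om Y\cdot(\mathcal{S}Y^T)$, now with $X=(u,\,v-v^\star,\,w-w^\star)$ and the reduced gradient vector $Y=(\frac{|\nabla v|}{v},\,|\nabla w|)$. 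The matrix $\mathcal{P}$ coincides with the one in Lemma \ref{2.12} after the substitution $\delta_1\to\delta_2$, while $\mathcal{S}$ collapses to the $2\times2$ block governing $(v,w)$ because the $\nabla u/u$ entries are now absent.

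It then remains to choose $\delta_2>0$ making both $\mathcal{P}$ and $\mathcal{S}$ positive definite. Sylvester's criterion applied to $\mathcal{S}$ demands the lower bound $\delta_2>\frac{a_2\chi_2^2v^\star}{4d_2d_3b_1}=4\lambda a_2\bar{\varpi}_3$. For $\mathcal{P}$, the first minor $\varpi_1>0$ is assumed, the second minor $\frac{a_2}{b_1}(\varpi_1\varpi_2-a_2b_1)$ is positive because \eqref{cdsa} already forces $\varpi_1\varpi_2>a_2b_1$, and positivity of $\det\mathcal{P}$ then forces the upper bound $\delta_2<\frac{4\lambda a_2(\varpi_1\varpi_2-a_2b_1)}{b_1l^2\varpi_1+a_2k^2\varpi_2-2a_2b_1kl}$, whose denominator is positive by \eqref{432}. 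The admissible window for $\delta_2$ is nonempty exactly when this lower bound lies strictly below the upper one, and that rearrangement is precisely the content of \eqref{cdsa}. Fixing such a $\delta_2$ and exploiting the definiteness, one produces $\epsilon_2>0$ with $X\cdot(\mathcal{P}X^T)\ge\epsilon_2|X|^2$ and $Y\cdot(\mathcal{S}Y^T)\ge\epsilon_2|Y|^2$, which gives \eqref{2.151}; the nonnegativity $E_2\ge0$ is immediate since $\int_\Om u\ge0$, the $w$-term is a square, and $s\mapsto s-v^\star-v^\star\ln\frac{s}{v^\star}$ is nonnegative by convexity.

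The step I expect to be most delicate is the bookkeeping that converts \eqref{cdsa} into nonemptiness of the $\delta_2$-interval: one must check that $\bar{\varpi}_3$ is indeed the $u^\star=0$ specialization of $\varpi_3$ and that the principal minors of $\mathcal{P}$ reproduce the $M_2,M_3$ of Lemma \ref{2.12} verbatim, so that \eqref{cdsa} plays exactly the role \eqref{tjas} played in the weak case. The remaining work — absorbing the nonlocal products $-a_4\int_\Om u\int_\Om(v-v^\star)$ and $-b_3\int_\Om(v-v^\star)\int_\Om u$ via Cauchy--Schwarz, and discarding the sign-definite squares arising from $a_3,b_4>0$ — is routine and entirely parallel to the weak-competition argument.
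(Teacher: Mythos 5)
Your proposal is correct and follows essentially the same route as the paper: test the first equation against $1$, the second against $1-\frac{v^\star}{v}$, the third against $w-w^\star$, weight the $v$-entropy by $\frac{a_2}{b_1}$, use \eqref{qjz} to kill the linear term $(a_0-a_2v^\star-a_4v^\star|\Om|)\int_\Om u$, and choose $\delta_2$ in the window between the $2\times2$ Sylvester bound $\delta_2>\frac{a_2\chi_2^2v^\star}{4d_2d_3b_1}$ and the $\det\bar{\mathcal{P}}>0$ bound, whose nonemptiness is exactly \eqref{432} together with \eqref{cdsa}. The matrices $\bar{\mathcal{P}}$, $\bar{\mathcal{S}}$ and the reduced vectors $\bar X$, $\bar Y$ you describe coincide with those in the paper's argument.
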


\begin{proof}
The proof follows a strategy similar to Lemma \ref{2.12} and the idea comes from \cite[Lemma 3.4]{bai2016equilibration}. For readers' convenience, we give a proof of \eqref{2.151} here. 

It follows from \eqref{432} and \eqref{cdsa} that
\begin{equation*}
\frac{\varpi_1\varpi_2-a_2b_1}{b_1l^2\varpi_1+a_2k^2\varpi_2-2a_2b_1kl}>\bar{\varpi}_3,
\end{equation*}
which leads to
\begin{equation*}
\frac{4\lambda a_2\left[(a_1-a_4|\Om|-b_3|\Om|)(b_2-b_3|\Om|-a_4|\Om|)-b_1a_2\right]}{(a_1-a_4|\Om|-b_3|\Om|)b_1l^2+(b_2-b_3|\Om|-a_4|\Om|)a_2k^2-2a_2b_1kl}
>\frac{a_2\chi_2^2v^\star}{4d_2d_3b_1},
\end{equation*}
so we can find a constant $\delta_2$ satisfies \eqref{3tj} and
\begin{equation}\label{44tj}
\delta_2>\frac{a_2\chi_2^2v^\star}{4d_2d_3b_1}.
\end{equation}
For such fixed $\delta_2$ the functional $E_2(t)$ can be rewritten as
\begin{equation*}
E_2(t)=A_2(t)+\frac{a_2}{b_1}B_2(t)+C_2(t)
\end{equation*}
with
\begin{equation*}\begin{split}
& A_2(t):=\int_{\Omega}u(\cdot, t),\\
& B_2(t):=\int_{\Omega}\left[v(\cdot, t)-v^\star-v^\star\ln\frac{v(\cdot, t)}{v^\star}\right]\\
& C_2(t):=\frac{\delta_2}{2}\int_\Om(w-w^\star)^2.
\end{split}\end{equation*}
Then we can use the first equation of problem \eqref{1} and the Young inequality to obtain
\begin{equation}{\label{2.44}}
\begin{split}
\frac{d}{dt}A_2(t)&=\int_\Om u\left(a_0-a_1u-a_2v-a_3\int_\Om u-a_4\int_\Om v\right)\\
&=\int_\Om u\left(a_0-a_1u-a_2(v-v^\star)-a_2v^\star-a_3\int_\Om u-a_4\int_\Om (v-v^\star)-a_4v^\star|\Om|\right)\\
&=(a_0-a_2v^\star-a_4v^\star|\Om|)\int_\Om u-a_1\int_\Om u^2-a_2\int_\Om u(v-v^\star)-a_3\left(\int_\Om u\right)^2\\
&\quad-a_4\int_\Om u\int_\Om (v-v^\star)\\
&\leq(a_0-a_2v^\star-a_4v^\star|\Om|)\int_\Om u-\left(a_1-a_4|\Om|\right)\int_\Om u^2-a_2\int_\Om u(v-v^\star)\\
&\quad+a_4|\Om|\int_\Om (v-v^\star)^2.
\end{split}\end{equation}
In fact, by the second inequality of \eqref{qjz} we know
\begin{equation*}
v^\star=\frac{b_0}{b_2+b_4|\Om|}>\frac{a_0}{a_2+a_4|\Om|},
\end{equation*}
then it is easy to check that $a_0-a_2v^\star-a_4v^\star|\Om|<0$, so \eqref{2.44} implies
\begin{equation}\label{2.4}
\frac{d}{dt}A_2(t)\leq-\left(a_1-a_4|\Om|\right)\int_\Om u^2-a_2\int_\Om u(v-v^\star)+a_4|\Om|\int_\Om (v-v^\star)^2.
\end{equation}
We testing the second equation of \eqref{1} by $1-\frac{v^\star}{v}$ to obtain
\begin{equation}{\label{2.444}}
\begin{split}
\frac{d}{dt}B_2(t)&=-d_2v^\star\int_{\Omega}\left|\frac{\nabla v}{v}\right|^2+\chi_2v^\star\int_{\Omega}\frac{\nabla v}{v}\cdot\nabla w\\
&\quad+\int_\Om(v-v^\star)\left(b_0-b_1u-b_2v-b_3\int_\Om u-b_4\int_\Om v\right).
\end{split}
\end{equation}
Using the equality $b_0=b_2v^\star+b_4|\Om|v^\star$ and the Young and the H\"{o}lder inequalities to have
\begin{equation*}\begin{split}
&\int_\Om(v-v^\star)\left(b_0-b_1u-b_2v-b_3\int_\Om u-b_4\int_\Om v\right)\\
&=\int_\Om(v-v^\star)\left(b_2v^\star+b_4v^\star|\Om|-b_1u-b_2v-b_3\int_\Om u-b_4\int_\Om v\right)\\
&=\int_\Om(v-v^\star)\left(-b_1u-b_2(v-v^\star)-b_3\int_\Om u-b_4\int_\Om(v-v^\star)\right)\\
&\leq -b_1\int_\Om(v-v^\star)u-(b_2-b_3|\Om|)\int_\Om(v-v^\star)^2+b_3|\Om|\int_\Om u^2.
\end{split}\end{equation*}
Then \eqref{2.444} gives
\begin{equation}{\label{2.52}}
\begin{split}
\frac{d}{dt}B_2(t)&\leq-d_2v^\star\int_{\Omega}\left|\frac{\nabla v}{v}\right|^2+\chi_2v^\star\int_{\Omega}\frac{\nabla v}{v}\cdot\nabla w\\
&\quad -b_1\int_\Om(v-v^\star)u-(b_2-b_3|\Om|)\int_\Om(v-v^\star)^2+b_3|\Om|\int_\Om u^2.
\end{split}
\end{equation}
Multiplying the third equation of \eqref{1} by $w-w^\star$ and by the fact $\lambda w^\star=lv^\star$, we further see
\begin{equation}{\label{2.102}}
\begin{split}
\frac{d}{dt}C_2(t)&=-\delta_2 d_3\int_{\Omega}|\nabla w|^{2}-\lambda\delta_2\int_{\Omega}(w-w^\star)^2+k\delta_2\int_{\Omega}(w-w^\star)u\\
&\quad+l\delta_2\int_{\Omega}(w-w^\star)(v-v^\star).
\end{split}
\end{equation}
It follows from \eqref{2.4}, \eqref{2.52} and \eqref{2.102} that
\begin{equation*}\begin{split}
\frac{d}{dt}E_2(t)&\leq-\left(a_1-a_4|\Om|-b_3|\Om|\right)\int_\Om u^2-2a_2\int_\Om u(v-v^\star)\\
&\quad+k\delta_2\int_{\Omega}u(w-w^\star)-\frac{a_2(b_2-b_3|\Om|-a_4|\Om|)}{b_1}\int_\Om(v-v^\star)^2\\
&\quad+l\delta_2\int_{\Omega}(v-v^\star)(w-w^\star)-\lambda\delta_2\int_{\Omega}(w-w^\star)^2\\
&\quad-\frac{a_2d_2v^\star}{b_1}\int_{\Omega}\left|\frac{\nabla v}{v}\right|^2+\frac{a_2\chi_2v^\star}{b_1}\int_{\Omega}\frac{\nabla v}{v}\cdot\nabla w-\delta_2 d_3\int_{\Omega}|\nabla w|^{2}\\
&=-\int_\Om\bar{X}\cdot(\bar{\mathcal {P}}\cdot \bar{X}^T)-\int_{\Omega}\bar{Y}\cdot(\bar{\mathcal {S}}\cdot \bar{Y}^T),
\end{split}\end{equation*}
where
\begin{equation*}
\bar{X}(x,t):=(u, v-v^\star, w-w^\star)\ \mbox{and}\ \bar{Y}(x,t):=\left(\frac{|\nabla v|}{v}, |\nabla w|\right),
\end{equation*}
furthermore, $\bar{\mathcal {P}}$ and $\bar{\mathcal {S}}$ are still the constant symmetric matrices given by
\begin{eqnarray*}
\bar{\mathcal {P}}:=\left(
\begin{array}{ccc}
a_1-a_4|\Om|-b_3|\Om|\ & a_2\ &-\frac{k\delta_2}{2}\\
a_2\ &\frac{a_2(b_2-b_3|\Om|-a_4|\Om|)}{b_1}\ &-\frac{l\delta_2}{2}\\
-\frac{k\delta_2}{2}\ & -\frac{l\delta_2}{2}\ &\lambda\delta_2
\end{array}
\right),
\end{eqnarray*}
and
\begin{eqnarray*}
\bar{\mathcal {S}}:=\left(
\begin{array}{cc}
\frac{a_2d_2v^\star}{b_1}\ &-\frac{a_2\chi_2v^\star}{2b_1}\\
-\frac{a_2\chi_2v^\star}{2b_1}\ &d_3\delta_2
\end{array}
\right).
\end{eqnarray*}
The positive definiteness of $\bar{\mathcal {P}}$ has been proved in Lemma \ref{2.12} already, and inequality \eqref{44tj} ensures symmetric matrices $\bar{\mathcal {S}}$ is positive definite. Hence, we can get (\ref{2.151}) and finish our proof as in Lemma \ref{2.12}.
\end{proof}

\begin{proof}[Proof of Theorem \ref{1q}]
Theorem \ref{1q} can be proved with similar arguments used in the proof of Theorem \ref{33}, so we omit detail here.
\end{proof}

%\bibliographystyle{plain}
%\bibliography{a}

\begin{thebibliography}{10}
\small

\bibitem{armstrong2006continuum}
N. Armstrong, K. Painter and J. Sherratt,
\newblock A continuum approach to modelling cell-cell adhesion.
\newblock {\em J. Theoret. Biol.}, 243(1): 98--113, 2006.

\bibitem{bai2016equilibration}
X.L. Bai and M. Winkler,
\newblock Equilibration in a fully parabolic two-species chemotaxis system with
  competitive kinetics.
\newblock {\em Indiana Univ. Math. J.}, 65(2): 553--583, 2016.

\bibitem{Bellomo2015Toward}
N.~Bellomo, A.~Bellouquid, Y.~Tao and M.~Winkler,
\newblock Toward a mathematical theory of Keller-Segel models of pattern
  formation in biological tissues.
\newblock {\em Math. Models Methods Appl. Sci.},
  25(09): 1663--1763, 2015.

\bibitem{bian2018nonlocal}
S. Bian, L. Chen and E. Latos,
\newblock Nonlocal nonlinear reaction preventing blow-up in supercritical case
  of chemotaxis system.
\newblock {\em Nonlinear Anal.}, 176: 178--191, 2018.

\bibitem{biler2013blowup}
P. Biler, E. Espejo and I. Guerra,
\newblock Blowup in higher dimensional two species chemotactic systems.
\newblock {\em Comm. Pure Appl. Anal.}, 12(1): 89--98, 2013.

\bibitem{biler2012blowup}
P. Biler and I. Guerra,
\newblock Blowup and self-similar solutions for two-component drift-diffusion
  systems.
\newblock {\em Nonlinear Anal. Theor. Methods Appl.},
  75(13): 5186--5193, 2012.

\bibitem{black2016}
T. Black, J. Lankeit and M. Mizukami,
\newblock On the weakly competitive case in a two-species chemotaxis model.
\newblock {\em IMA J. Appl. Math.}, 81(5): 860--876, 2016.

\bibitem{conca2011remarks}
C. Conca, E. Espejo and K. Vilches,
\newblock Remarks on the blowup and global existence for a two species
  chemotactic Keller-Segel system in $R^2$.
\newblock {\em European J. Appl. Math.}, 22(6): 553--580, 2011.

\bibitem{espejo2009simultaneous}
E. Espejo, A. Stevens and J. Vel{\'a}zquez,
\newblock Simultaneous finite time blow-up in a two-species model for
  chemotaxis.
\newblock {\em Analysis}, 29(3): 317--338, 2009.

\bibitem{espejo2012simultaneous}
E. Espejo, A. Stevens and T. Suzuki.
\newblock Simultaneous blowup and mass separation during collapse in an
  interacting system of chemotactic species.
\newblock {\em Differ. Integral Equ.}, 25(3/4): 251--288, 2012.

\bibitem{espejo2010note}
E. Espejo, A. Stevens and J. Velazquez.
\newblock A note on non-simultaneous blow-up for a drift-diffusion model.
\newblock {\em Differ. Integral Equ.}, 23(5/6): 451--462, 2010.

\bibitem{gerisch2008mathematical}
A.~Gerisch and M. Chaplain.
\newblock Mathematical modelling of cancer cell invasion of tissue: local and
  non-local models and the effect of adhesion.
\newblock {\em J. Theoret. Biol.}, 250(4): 684--704, 2008.

\bibitem{green2010non}
J. Green, S. Waters, J. Whiteley, L. Keshet, K. Shakesheff and H. Byrne,
\newblock Non-local models for the formation of hepatocyte-stellate cell
  aggregates.
\newblock {\em J. Theoret. Biol.}, 267(1): 106--120, 2010.

\bibitem{herrero1997finite}
M. Herrero, E. Medina and J. Vel{\'a}zquez,
\newblock Finite-time aggregation into a single point in a reaction-diffusion
  system.
\newblock {\em Nonlinearity}, 10(6): 1739, 1997.

\bibitem{herrero1996singularity}
M. Herrero and J. Vel{\'a}zquez,
\newblock Singularity patterns in a chemotaxis model.
\newblock {\em Math. Ann.}, 306(1): 583--623, 1996.

\bibitem{herrero1997blow}
M. Herrero and J. Vel{\'a}zquez.
\newblock A blow-up mechanism for a chemotaxis model.
\newblock {\em  Ann. Sc. Norm. Super. Pisa, Cl. Sci.}, 24(4): 633--683, 1997.

\bibitem{hibbing2010bacterial}
M. Hibbing, C. Fuqua, M. Parsek and S. Peterson,
\newblock Bacterial competition: surviving and thriving in the microbial
  jungle.
\newblock {\em Nat. Rev. Microbiol.}, 8(1): 15, 2010.

\bibitem{hillen2009user}
T. Hillen and K. Painter,
\newblock A user's guide to pde models for chemotaxis.
\newblock {\em J. Math. Biol.}, 58(1-2): 183, 2009.

\bibitem{horstmann20031970}
D. Horstmann,
\newblock From 1970 until present: the Keller-Segel model in chemotaxis and its
  consequences I.
\newblock {\em Jahresber. DMV}, 105(3): 103-165, 2003.

\bibitem{horstmann2011generalizing}
D. Horstmann,
\newblock Generalizing the keller--segel model: Lyapunov functionals, steady
  state analysis, and blow-up results for multi-species chemotaxis models in
  the presence of attraction and repulsion between competitive interacting
  species.
\newblock {\em J. Nonlinear Sci.}, 21(2): 231--270, 2011.

\bibitem{hu2014global}
J. Hu, Q. Wang, J.Y. Yang and L. Zhang,
\newblock Global existence and steady states of a two competing species
  Keller-Segel chemotaxis model.
\newblock {\em Kinet. Relat. Models}, 8: 777--807, 2015.

\bibitem{ishida2014boundedness}
S. Ishida, K. Seki and T. Yokota,
\newblock Boundedness in quasilinear Keller-Segel systems of
  parabolic-parabolic type on non-convex bounded domains.
\newblock {\em J. Differential Equations}, 256(8): 2993--3010, 2014.

\bibitem{issa2017dynamics}
T. Issa and W.X. Shen,
\newblock Dynamics in chemotaxis models of parabolic-elliptic type on bounded
  domain with time and space dependent logistic sources.
\newblock {\em SIAM J. Appl. Dyn. Syst.}, 16(2): 926--973,
  2017.

\bibitem{jager1992explosions}
W. J{\"a}ger and S. Luckhaus,
\newblock On explosions of solutions to a system of partial differential
  equations modelling chemotaxis.
\newblock {\em Tran. Amer. Math. Soc.},
  329(2): 819--824, 1992.

\bibitem{Chemotaxis}
L. Johannes,
\newblock Chemotaxis can prevent thresholds on population density.
\newblock {\em Discrete Contin. Dyn. Syst. Ser. B}, 20(5): 1499-1527, 2015.

\bibitem{kavallaris2018multi}
N. Kavallaris, T. Ricciardi and G. Zecca,
\newblock A multi-species chemotaxis system: Lyapunov functionals, duality,
  critical mass.
\newblock {\em European J. Appl. Math.}, 29(3): 515--542, 2018.

\bibitem{keller1970initiation}
E. Keller and L. Segel,
\newblock Initiation of slime mold aggregation viewed as an instability.
\newblock {\em J. Theoret. Biol.}, 26(3): 399--415, 1970.

\bibitem{keller1971model}
E. Keller and L. Segel,
\newblock Model for chemotaxis.
\newblock {\em J. Theoret. Biol.}, 30(2): 225--234, 1971.

\bibitem{li2018large}
J. Li, Y.Y. Ke and Y.F. Wang,
\newblock Large time behavior of solutions to a fully parabolic
  attraction-repulsion chemotaxis system with logistic source.
\newblock {\em Nonlinear Anal.: Real World Appl.}, 39: 261--277, 2018.

\bibitem{li2019fully}
X. Li and Y.L. Wang,
\newblock On a fully parabolic chemotaxis system with Lotka-Volterra
  competitive kinetics.
\newblock {\em J. Math. Anal. Appl.},
  471(1-2): 584--598, 2019.

\bibitem{li2019emergence}
Y. Li,
\newblock Emergence of large densities and simultaneous blow-up in a
  two-species chemotaxis system with competitive kinetics.
\newblock {\em Discrete Contin. Dyn. Syst. Ser. B}, 553--583,
  2019.

\bibitem{lin2016global}
K. Lin and C.L. Mu,
\newblock Global dynamics in a fully parabolic chemotaxis system with logistic
  source.
\newblock {\em Discrete Contin. Dyn. Syst. Ser. A}, 36(9): 5025--5046,
  2016.

\bibitem{lin2015boundedness}
K. Lin, C.L. Mu and L.C. Wang,
\newblock Boundedness in a two-species chemotaxis system.
\newblock {\em Math. Methods Appl. Sci.},
  38(18): 5085--5096, 2015.

\bibitem{lin2018new}
K. Lin, C.L. Mu and H. Zhong,
\newblock A new approach toward stabilization in a two-species chemotaxis model
  with logistic source.
\newblock {\em Comput. Math. Appl.}, 75(3): 837--849,
  2018.

\bibitem{matthias1997heat}
H. Matthias and P. Jan,
\newblock Heat kernels and maximal $L^p-L^q$ estimates for parabolic evolution
  equations.
\newblock {\em Comm. Partial Differential Equations},
  22(9-10): 1647--1669, 1997.

\bibitem{mimura1996aggregating}
M. Mimura and T. Tsujikawa.
\newblock Aggregating pattern dynamics in a chemotaxis model including growth.
\newblock {\em Physica A},
  230(3-4): 499--543, 1996.

\bibitem{mizukami2018boundedness}
M. Mizukami,
\newblock Boundedness and stabilization in a two-species chemotaxis-competition
  system of parabolic-parabolic-elliptic type.
\newblock {\em Math. Methods Appl. Sci.}, 41(1): 234--249,
  2018.

\bibitem{murray1989mathematical}
J. Murray,
\newblock Mathematical biology, 2nd ed., biomathematics, vol. 19, 1993.

\bibitem{nagai2001blowup}
T. Nagai,
\newblock Blowup of nonradial solutions to parabolic-elliptic systems modeling
  chemotaxis in two-dimensional domains.
\newblock {\em J. Inequal. Appl.}, 6: 37--55, 2001.

\bibitem{negreanu2013competitive}
M. Negreanu and J. Tello,
\newblock On a competitive system under chemotactic effects with non-local
  terms.
\newblock {\em Nonlinearity}, 26(4): 1083, 2013.

\bibitem{negreanu2014two}
M. Negreanu and J. Tello,
\newblock On a two species chemotaxis model with slow chemical diffusion.
\newblock {\em SIAM J. Math. Anal.}, 46(6): 3761--3781, 2014.

\bibitem{nirenberg1966extended}
L. Nirenberg,
\newblock An extended interpolation inequality.
\newblock {\em Ann. Sc. Norm. Super. Pisa, Cl. Sci.}, 20(4): 733--737, 1966.

\bibitem{painter2003modelling}
K. Painter and J. Sherratt,
\newblock Modelling the movement of interacting cell populations.
\newblock {\em J. Theoret. Biol.}, 225(3): 327--339, 2003.

\bibitem{pearce2007chemotaxis}
I. Pearce, M. Chaplain, P. Schofield, A. Anderson and S. Hubbard,
\newblock Chemotaxis-induced spatio-temporal heterogeneity in multi-species
  host-parasitoid systems.
\newblock {\em J. Math. Biol.}, 55(3): 365--388, 2007.

\bibitem{RachidiAsymptotic}
R. Salako and T. Issa,
\newblock Asymptotic dynamics in a two-species chemotaxis model with non-local
  terms.
\newblock {\em Discrete Contin. Dyn. Syst. Ser. B}, 22(10): 3839--3874, 2017.

\bibitem{sherratt2009boundedness}
J. Sherratt, S. Gourley, N. Armstrong and K. Painter,
\newblock Boundedness of solutions of a non-local reaction-diffusion model for
  adhesion in cell aggregation and cancer invasion.
\newblock {\em European J. Appl. Math.}, 20(1): 123--144, 2009.

\bibitem{stinner2014competitive}
C. Stinner, J. Tello and M. Winkler,
\newblock Competitive exclusion in a two-species chemotaxis model.
\newblock {\em J. Math. Biol.}, 68(7): 1607--1626, 2014.

\bibitem{szymanska2009mathematical}
Z. Szyma{\'n}ska, C. Rodrigo, M. Lachowicz and M. Chaplain,
\newblock Mathematical modelling of cancer invasion of tissue: the role and
  effect of nonlocal interactions.
\newblock {\em Math. Models Methods Appl. Sci.},
  19(02): 257--281, 2009.

\bibitem{tao2015boundedness}
Y.S. Tao and M. Winkler,
\newblock Boundedness vs. blow-up in a two-species chemotaxis system with two
  chemicals.
\newblock {\em Discrete Contin. Dyn. Syst. Ser. B}, 204(9): 3165-3183, 2015.

\bibitem{tao2016blow}
Y.S. Tao and M. Winkler,
\newblock Blow-up prevention by quadratic degradation in a two-dimensional
  Keller-Segel-Navier-Stokes system.
\newblock {\em Z. Angew. Math. Phy.},
  67(6): 138, 2016.

\bibitem{tello2007chemotaxis}
J. Tello and M. Winkler,
\newblock A chemotaxis system with logistic source.
\newblock {\em Comm. Partial Differential Equations},
  32(6): 849--877, 2007.

\bibitem{tello2012stabilization}
J. Tello and M. Winkler,
\newblock Stabilization in a two-species chemotaxis system with a logistic
  source.
\newblock {\em Nonlinearity}, 25(5): 1413, 2012.

\bibitem{wolansky2002multi}
G. Wolansky,
\newblock Multi-components chemotactic system in the absence of conflicts.
\newblock {\em European J. Appl. Math.}, 13(6): 641--661, 2002.

\bibitem{wang2019improvement}
L.C. Wang,
\newblock Improvement of conditions for boundedness in a two-species chemotaxis
  competition system of parabolic-parabolic-elliptic type.
\newblock {\em J. Math. Anal. Appl.}, 484(1): 123705, 2020.

\bibitem{winkler2010boundedness}
M. Winkler,
\newblock Boundedness in the higher-dimensional parabolic-parabolic chemotaxis system with logistic source.
\newblock {\em Comm. Partial Differential Equations}, 35(8): 1516--1537, 2010.

\bibitem{winkler2011blow}
M. Winkler,
\newblock Blow-up in a higher-dimensional chemotaxis system despite logistic
  growth restriction.
\newblock {\em J. Math. Anal. Appl.},
  2(384): 261--272, 2011.

\bibitem{winkler2013finite}
M. Winkler,
\newblock Finite-time blow-up in the higher-dimensional parabolic--parabolic
  Keller-Segel system.
\newblock {\em J. Math. Pures Appl.},
  100(5): 748--767, 2013.

\bibitem{Winkler2014How}
M. Winkler,
\newblock How far can chemotactic cross-diffusion enforce exceeding carrying
  capacities?
\newblock {\em J. Nonlinear Sci.}, 24(5): 809--855, 2014.

\bibitem{winkler2017emergence}
M. Winkler,
\newblock Emergence of large population densities despite logistic growth
  restrictions in fully parabolic chemotaxis systems.
\newblock {\em Discrete Contin. Dyn. Syst. Ser. B}, 22(7): 2777--2793, 2017.

\bibitem{winkler2018finite}
M. Winkler,
\newblock Finite-time blow-up in low-dimensional Keller-Segel systems with
  logistic-type superlinear degradation.
\newblock {\em Z. Angew. Math. Phy.}, 69(2): 40,
  2018.

\bibitem{Wang2015Time}
Q. Wang, J.Y. Yang and L. Zhang,
\newblock Time-periodic and stable patterns of a two-competing-species
  Keller-Segel chemotaxis model: effect of cellular growth.
\newblock {\em Discrete Contin. Dyn. Syst. Ser. B}, 22(9): 3547--3574, 2017.

\bibitem{xiang2018strong}
T. Xiang,
\newblock How strong a logistic damping can prevent blow-up for the minimal
  Keller-Segel chemotaxis system?
\newblock {\em J. Math. Anal. Appl.},
  459(2): 1172--1200, 2018.

\bibitem{muxu}
G.Y. Xu,
\newblock The carrying capacity to chemotaxis system with two species and competitive kinetics in $N$ dimensions.
\newblock {\em Z. Angew. Math. Phy.}, DOI: 10.1007/s00033-020-01363-z, in press.

\bibitem{yang2015boundedness}
C.B. Yang, X.R. Cao, Z.X. Jiang and S.N. Zheng,
\newblock Boundedness in a quasilinear fully parabolic Keller-Segel system of
  higher dimension with logistic source.
\newblock {\em J. Math. Anal. Appl.},
  430(1): 585--591, 2015.

\bibitem{yao2006chemotaxis}
J. Yao and C. Allen,
\newblock Chemotaxis is required for virulence and competitive fitness of the
  bacterial wilt pathogen ralstonia solanacearum.
\newblock {\em J. Bacteriol.}, 188(10): 3697--3708, 2006.

\end{thebibliography}

\end{document}